\newcommand{\ignore}[1]{}
\newcommand{\e}{\mathbb{E \,}}
\newcommand{\BR}{\mathbb{R}}
\newcommand{\p}{\mathbb{P}}
\newcommand{\var}{{\rm Var}}
\newcommand{\C}{{\log M}}
\newcommand{\X}{{\mathcal X}}
\journalname{Probability Theory and Related Fields}
\begin{document}

\title{Bounded size bias coupling: a gamma function bound, and universal Dickman-function behavior
}

\titlerunning{Bounded size bias coupling}        

\author{Richard Arratia         \and
        Peter Baxendale 
}


\institute{R. Arratia \at
              Department of Mathematics, University of Southern California,
Los Angeles, CA 90089 \\
              \email{rarratia@usc.edu}           
           \and
           P. Baxendale \at
              Department of Mathematics, University of Southern California,
Los Angeles, CA 90089 \\
                \email{baxendal@usc.edu}
}

\date{Received: March 26, 2014 / Accepted: June 20, 2014 }

\maketitle

\begin{abstract}
Under the assumption that the distribution of a nonnegative random variable $X$ admits a bounded coupling with its
size biased version,  we prove  simple and strong concentration bounds.
In particular the upper tail probability is shown to  decay at least as fast as
the reciprocal of a Gamma function,
 guaranteeing a moment generating function that converges everywhere.  The class of infinitely divisible distributions with finite mean, whose L\'evy measure is supported on an interval contained in $[0,c]$ for some $c < \infty$, forms a special case in which this upper bound is logarithmically sharp. In particular the asymptotic estimate for the Dickman function, that $\rho(u) \approx u^{-u}$ for large $u$, is shown to be universal for this class.

A special case of our bounds arises when $X$ is a sum of independent random variables, each admitting a 1-bounded size bias coupling.  In this case, our bounds are comparable to Chernoff--Hoeffding bounds; however,
ours are broader in scope, sharper for the upper tail, and equal for the lower tail.

We discuss \emph{bounded} and \emph{monotone} couplings, give a sandwich principle, and show how this gives an easy conceptual proof that any finite positive mean
sum of independent Bernoulli random variables
admits a 1-bounded coupling with the same
conditioned to be nonzero.

\keywords{Size bias \and Coupling \and Concentration inequalities \and Chernoff-Hoeffding bounds \and Dickman function}
 \subclass{60E15}
\end{abstract}

\section{Introduction and Main Results} \label{sect intro}

For any nonnegative random variable $X$
with $0 < \e X < \infty$, we say that the distribution of $Y$ is the size biased distribution of $X$, written $Y =^d X^*$, if
the Radon-Nikodym derivative  of the distribution of $Y$, with respect to the distribution of $X$, is given by  \mbox{$\p(Y \in dx)/\p(X \in dx)$} $ = x/ \e X$. If $Y =^d X^*$, then for all
bounded measurable $g$,  $\e g(Y) = \e( X g(X)) / \e X$.  For much more information about size biased distributions see \cite{AGK}, or \cite[pp 78--80]{ABT}.

In this paper we shall assume that $X$ admits a $c$-bounded size bias coupling.  More precisely we make the hypothesis

\vspace{1ex}

\noindent {\bf BSBC}:   $X$ is a non-negative random variable with positive finite expected value $\e X = a$, and for $Y =^d X^*$ there exists a coupling in which $  Y \le X+c$
for some $c \in (0,\infty)$.

\vspace{2ex}

Throughout the paper the numbers $a$ and $c$ will always refer to their definitions in {\bf BSBC}.  Examples of random variables which admit a $c$-bounded size bias coupling are given in Section \ref{sect examples}, and some equivalent formulations of the assumption {\bf BSBC} are given in Section \ref{sect bounded}

Define
$$
  F(x ) := \p( X \le x), \ \ G(x) := \p(X \ge x).
$$
When $X$ has finite mean $a$, concentration inequalities refer to upper bounds on the upper tail probability $G(x) = \p(X \ge x)$ for $x \ge a$ and on the lower tail probability $F(x) = \p(X \le x)$ for $x \le a$.  For a review of early results on concentration inequalities see Ledoux \cite{ledoux}.  More recently Chatterjee \cite{chatterjee} has used Stein's method for exchangeable pairs to obtain concentration inequalities, see also \cite{chatterjeedey}.

The remarkably effective idea of using bounded size bias couplings to prove concentration inequalities comes from
Ghosh and Goldstein \cite{GhoshGold};  their proof is inspired by the $x \mapsto e^x$ is convex argument used to prove the Hoeffding concentration bounds, see \cite{hoeffding}.  For many examples of the application of concentration bounds derived from size bias couplings, in situations involving \emph{dependence}, see \cite{GhoshGold2,GhoshGold,AGKillian,BGI}.  Some details of the example in \cite{AGKillian} are given in Section \ref{sect examples}.  An extension to a multivariate setting is given in Ghosh and I\c{s}lak \cite{GI}

Number theorists denote by
   $
   \Psi(x,y)
   $
the number of positive integers not exceeding $x$ and free of prime factors larger than $y$.  Dickman \cite{dickman} showed that for any $u > 0$
   $$
   \lim_{y \to \infty} \Psi(y^u,y) y^{-u} = \rho(u)
   $$
where $\rho(u)$, the Dickman function, is the unique continuous function satisfying
    $$
    \left\{ \begin{array}{rcll}
              u\rho'(u) & = & -\rho(u-1) & \quad \mbox{if }u > 1 \\
              \rho(u)  & =  & 1 & \quad  \mbox{if }0 \le u \le 1.
              \end{array} \right.
              $$
In Hildebrand and Tenenbaum \cite[Lemma 2.5]{HT93}
it is shown that
       $$ 
         u \rho(u) = \int_{u-1}^u \rho(v)\,dv, \quad  u \ge 1
       $$ 
and then a simple inductive proof gives the inequality
         \begin{equation} \label{dickman ineq}
         0 < \rho(u) \le \frac{1}{\Gamma(u+1)}, \quad u \ge 0,
         \end{equation}
where $\Gamma(u)$ denotes the usual Gamma function.  It is well known that $\log \Gamma(u) \sim u \log u$ as $u \to \infty$, and the Dickman function $\rho(u)$ exhibits similar asymptotic behavior: $\log \rho(u) \sim -u \log u$ as $u \to \infty$, see \cite[Cor 2.3]{HT93}.

Inspired by the proof of \eqref{dickman ineq}, we prove stronger concentration inequalities than \cite{GhoshGold}, under weaker hypotheses, and with a simpler proof.  In the following, the notation $\lfloor x \rfloor$ is used for the floor function, that is, the greatest integer less than or equal to $x$.

\begin{theorem}\label{simple thm}
Assume {\bf BSBC}.  Given $x$ let
\begin{equation}\label{def k}
   k  = \lfloor \frac{| x-a|}{c} \rfloor,
\end{equation}
so that $k$ is a nonnegative integer, possibly zero.
Then
\begin{equation}\label{upper}
\text{ for } x\ge a, \ \      G(x) \le u(x,a,c) := \prod_{0 \le i \le k} \frac{a}{x-ic},
\end{equation}
and
\begin{equation}\label{lower}
\text{ for } 0 \le x \le a, \ \    F(x) \le \ell(x,a,c) := \prod_{0 < i \le k} \frac{x+ic}{a}.
\end{equation}
\end{theorem}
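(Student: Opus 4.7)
The plan is to combine the defining identity of size biasing with the monotonicity inherited from the coupling $Y\le X+c$ to obtain a one-step recursion for each tail, and then iterate. The starting point is that, since $Y=^d X^*$, every bounded measurable $g$ satisfies $\e[X\, g(X)] = a\,\e[g(Y)]$.

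For the upper tail, I would apply this identity with the nondecreasing indicator $g(x)=\mathbf{1}_{\{x\ge t\}}$. The left side dominates $t\,G(t)$, and because $g(Y)\le g(X+c)=\mathbf{1}_{\{X\ge t-c\}}$, the right side is at most $a\,G(t-c)$. Combining gives
\[
G(t)\le \frac{a}{t}\,G(t-c)\qquad (t>0).
\]
For $x\ge a$ with $k$ as in \eqref{def k}, I would iterate this $k+1$ times at $t=x,\,x-c,\dots,x-kc$. Because $x-kc\ge a$, all denominators are positive (in fact every factor $a/(x-ic)$ with $i\le k$ is $\le 1$), and bounding the residual $G(x-(k+1)c)\le 1$ produces the product in \eqref{upper}.

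For the lower tail, I would take the nonincreasing indicator $g(x)=\mathbf{1}_{\{x\le t\}}$. Now $\e[X\,g(X)]\le t\,F(t)$, and the coupling runs the other way: $g(Y)\ge g(X+c)=\mathbf{1}_{\{X\le t-c\}}$, so $\e[g(Y)]\ge F(t-c)$. Substituting into the size-bias identity yields the dual recursion
\[
a\,F(t-c)\le t\,F(t).
\]
For $0\le x\le a$ with $k$ as in \eqref{def k}, I would set $t=x+c,\,x+2c,\dots,x+kc$ in turn; each such $t$ satisfies $t\le a$, so every factor $(x+ic)/a$ for $1\le i\le k$ is $\le 1$, and after bounding the residual $F(x+kc)\le 1$ we arrive at \eqref{lower}.

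There is no real obstacle here; the scheme is just the Markov-inequality-and-iterate argument used in the derivation of \eqref{dickman ineq}, driven now by the size-bias identity and the coupling in place of the Dickman ODE. The only point requiring attention is that $k+1$ (resp.\ $k$) is exactly the right number of iterations: one further step would introduce a factor $a/(x-(k+1)c)>1$ (resp.\ $(x+(k+1)c)/a>1$) which, combined with the trivial residual bound, would weaken rather than sharpen the estimate.
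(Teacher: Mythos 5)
Your proposal is correct and follows essentially the same route as the paper: the size-bias identity applied to tail indicators, combined with the inclusion of events forced by the coupling $Y\le X+c$, yields exactly the paper's one-step recursions $G(t)\le \tfrac{a}{t}G(t-c)$ and $aF(t-c)\le tF(t)$ (its Lemma \ref{lemma one step}), which are then iterated the same number of times and closed with the trivial bound by $1$. No gaps; the observation that $x-kc\ge a>0$ (resp.\ $x+kc\le a$) keeps every factor at most $1$ matches the paper's stopping rule.
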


\begin{remark} \label{remark scaling} {\bf Scaling.} It is simplest, both for notation and concept, to work with the special case where the constant $c$ in {\bf BSBC} satisfies $c=1$.  The results derived for this special case easily transform into results for the general case, since if $Y =^d X^*$ and $Y \le X+c$, then
\begin{equation}\label{make c one with everything}
(Y/c) =^d (X/c)^*, \ \text{ and }(Y/c) \le (X/c) + 1.
\end{equation}
In particular the upper bounds in \eqref{upper} and \eqref{lower} satisfy
\begin{equation}\label{scaling}
     u(x,a,c) = u(x/c,a/c,1), \ \ \ \ell(x,a,c) = \ell(x/c,a/c,1).
\end{equation}
for all $a,c>0$ and $x \ge 0$.
\end{remark}

An opportunity to use \eqref{scaling} occurs in the following result, which provides a more convenient closed form version of the concentration inequalities above.

\begin{theorem}\label{theorem gamma}  The upper tail bound defined by \eqref{upper} and the lower tail bound defined by \eqref{lower}
satisfy
 \begin{equation}\label{gamma upper 1}
   \text{ for } x\ge a, \ \     u(x,a,1) \le \frac{ a^{x-a} \ \Gamma(a+1)}{\Gamma(x+1)},
\end{equation}
and
\begin{equation}\label{gamma lower 1}
  \text{ for } 0 \le x \le a, \ \    \ell(x,a,1) \le  \frac{\Gamma(a+1)}{a^{a-x}\Gamma(x+1)},
\end{equation}
with, in each case, equality if and only if $x-a$ is an integer.
\end{theorem}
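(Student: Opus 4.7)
The plan is to convert each finite product into a ratio of Gamma values via the telescoping identity
\begin{equation*}
x(x-1)(x-2)\cdots(x-k) = \frac{\Gamma(x+1)}{\Gamma(x-k)},
\end{equation*}
and then reduce both stated bounds to the single interpolation inequality
\begin{equation*}
\Gamma(a+t) \le a^t\, \Gamma(a), \qquad 0 \le t \le 1,
\end{equation*}
with equality if and only if $t \in \{0,1\}$.

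Explicitly, for the upper tail, set $t := x - a - k$; the constraint $k = \lfloor x-a \rfloor$ forces $t \in [0,1)$ and $x-k = a+t$. The telescoping identity then gives
\begin{equation*}
u(x,a,1) = \prod_{i=0}^{k}\frac{a}{x-i} = \frac{a^{k+1}\, \Gamma(x-k)}{\Gamma(x+1)} = \frac{a^{k+1}\, \Gamma(a+t)}{\Gamma(x+1)},
\end{equation*}
so, after using $\Gamma(a+1) = a\Gamma(a)$, the bound \eqref{gamma upper 1} reduces exactly to $\Gamma(a+t) \le a^t\Gamma(a)$. For the lower tail, the analogous manipulation
\begin{equation*}
\ell(x,a,1) = \prod_{i=1}^{k}\frac{x+i}{a} = \frac{\Gamma(x+k+1)}{a^k\Gamma(x+1)}
\end{equation*}
with $t := x+k+1-a \in (0,1]$ collapses \eqref{gamma lower 1} to the same inequality.

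To establish the interpolation inequality I would invoke log-convexity of $\Gamma$ on $(0,\infty)$. Define $\phi(t) := \log\Gamma(a+t) - t\log a$. Then $\phi(0) = \log\Gamma(a)$, and by $\Gamma(a+1) = a\Gamma(a)$ also $\phi(1) = \log\Gamma(a)$. Since $\log\Gamma$ is strictly convex (its second derivative, the trigamma function, is strictly positive on $(0,\infty)$), so is $\phi$; hence
\begin{equation*}
\phi(t) \le (1-t)\phi(0) + t\phi(1) = \log\Gamma(a), \qquad t \in [0,1],
\end{equation*}
with strict inequality for $t \in (0,1)$. Exponentiating yields both the claimed inequality and the equality condition.

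Finally, the equality clause in the theorem matches correctly under both reductions: in the upper-tail setup $t = 0$ iff $x - a = k$, and in the lower-tail setup $t = 1$ iff $a - x = k$, each equivalent to $x-a \in \mathbb{Z}$. There is no serious obstacle in this proof; the only care required is the bookkeeping that lines up the boundary values $t \in \{0,1\}$ with the stated equality case.
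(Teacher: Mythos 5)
Your proposal is correct and follows essentially the same route as the paper: rewrite each product as a ratio of Gamma values and reduce both bounds to the single interpolation inequality $a^{1-f}\Gamma(a+f)\le\Gamma(a+1)$ for $f\in[0,1]$ (your $\Gamma(a+t)\le a^t\Gamma(a)$ is the same statement), with the equality case coming from strictness at interior $f$. The only difference is cosmetic: you justify the key inequality by strict log-convexity of $\Gamma$ via positivity of the trigamma function, whereas the paper obtains the same log-convexity chord bound $\Gamma(a+f)\le\Gamma(a)^{1-f}\Gamma(a+1)^{f}$ by applying H\"older's inequality to the Euler integral.
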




It is an immediate consequence of these results that if $X$ satisfies {\bf  BSBC} then
     \begin{equation} \label{log G}
     \limsup_{x \to \infty} \frac{\log G(x)}{(x/c) \log x} \le -1
     \end{equation}
In Section \ref{sect inf div} we present a class of random variables satisfying {\bf BSBC} for which the $\limsup$ and inequality in \eqref{log G} can be replaced by $\lim$ and equality, see Theorem \ref{thm universal}.

An alternative approach, described in Section \ref{sect AB}, involves an upper bound on the moment generating function of $X$ and gives the following result.

\begin{theorem}  \label{theorem AB}
Assume {\bf BSBC}.  Then
  \begin{equation} \label{AB upper}
   \text{ for } x\ge a, \ \       G(x)  \le \left(\frac{a}{x}\right)^{x/c} e^{(x-a)/c},
    \end{equation}
and
    \begin{equation} \label{AB lower}
   \text{ for } 0 \le x \le a, \ \    F(x) \le \left(\frac{a}{x}\right)^{x/c} e^{(x-a)/c}.
    \end{equation}
 \end{theorem}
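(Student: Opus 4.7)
The plan is to prove Theorem \ref{theorem AB} by the standard Chernoff approach (Markov's inequality applied to $e^{tX}$), where the size bias coupling hypothesis {\bf BSBC} is used to derive a sharp upper bound on the moment generating function $M(t) := \e e^{tX}$. By scaling (Remark \ref{remark scaling}), I may and will assume $c = 1$, and recover the general statement at the end by replacing $X,a,x$ with $X/c,a/c,x/c$.

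The key identity is the size bias relation $M'(t) = \e[X e^{tX}] = a\, \e e^{tY}$, where $Y =^d X^*$ is coupled to $X$ with $Y \le X+1$. For $t \ge 0$ the coupling gives $e^{tY} \le e^t e^{tX}$, so $M'(t) \le a e^t M(t)$, i.e.\ $(\log M)'(t) \le a e^t$; integrating from $0$ yields $\log M(t) \le a(e^t - 1)$. For $t \le 0$ the monotonicity flips: $e^{tY} \ge e^t e^{tX}$, so $M'(t) \ge a e^t M(t)$; integrating this inequality from $t$ up to $0$ and rearranging yields exactly the same bound $\log M(t) \le a(e^t - 1)$, now for $t \le 0$. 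Note that $M(t)$ is automatically finite for $t \le 0$ since $X \ge 0$, and finite for $t \ge 0$ by the bound just obtained (or by Theorem~\ref{theorem gamma}). Hence
\begin{equation*}
\log M(t) \le a(e^t - 1) \quad \text{for all } t \in \BR.
\end{equation*}

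With this MGF bound in hand, the two tail estimates come from Markov's inequality and an explicit optimization. For the upper tail with $x \ge a$, $G(x) \le e^{-tx} M(t) \le \exp(-tx + a(e^t - 1))$ for any $t \ge 0$. Differentiating the exponent in $t$ gives the optimum at $e^t = x/a$, so $t = \log(x/a) \ge 0$; substituting yields $G(x) \le (a/x)^x e^{x-a}$. For the lower tail with $0 \le x \le a$, apply Markov to $e^{-tX}$ for $t \ge 0$: $F(x) \le e^{tx} M(-t) \le \exp(tx + a(e^{-t} - 1))$; the optimum is at $e^{-t} = x/a$, i.e.\ $t = \log(a/x) \ge 0$, and the same expression $(a/x)^x e^{x-a}$ emerges. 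Reinserting the factor of $c$ via the scaling identity \eqref{scaling} produces the bounds \eqref{AB upper} and \eqref{AB lower}.

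The only real obstacle is the sign reversal of the coupling inequality for $t < 0$, which I have to negotiate carefully to extend the MGF bound to the entire real line; once that extension is established, both tails fall to the same Chernoff optimization. (Strictly speaking, only $t \ge 0$ is needed for the upper tail, and only $t \le 0$ for the lower tail, so the two halves of the MGF bound are used separately and the proof does not actually require a two-sided $M(t)$.)
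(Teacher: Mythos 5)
Your proposal is correct and is essentially the paper's own proof: the size-bias differential inequality $M'(t)=a\,\e e^{tY}$ together with the coupling $Y\le X+c$ gives $(\log M)'(t)\le a e^{tc}$ (with the reversed inequality for $t\le 0$), hence $\log M(t)\le \frac{a}{c}(e^{tc}-1)$ for all $t$ (Proposition \ref{prop mgf}), and the two tails then follow from the standard Chernoff optimization at $e^{tc}=x/a$, with finiteness and differentiability of $M$ supplied, as you note, by the product/Gamma bounds (Corollary \ref{cor mgf} in the paper). The only cosmetic differences are that you normalize to $c=1$ by scaling and that the boundary case $x=0$ of the lower tail, where $t=\log(a/x)$ is undefined, should be handled by letting $t\to\infty$ as the paper does explicitly.
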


Here we use the convention that
$(1/0)^0  = \lim_{x \to 0^+} (1/x)^x = 1$, so we can regard $\displaystyle{ \left(a/x\right)^{x/c}}$ as being well-defined and taking the value 1 when $x = 0$. Some elementary analysis, see Lemmas \ref{lemma hoeffding upper} and \ref{lemma hoeffding lower}, allows us to replace the bounds in Theorem \ref{theorem AB} with the strictly weaker (for $x \neq a$) bounds obtained by Ghosh and Goldstein in \cite{GhoshGold}.

\begin{corollary}  \label{cor-intro}  Assume {\bf BSBC}.  Then
   \begin{equation} \label{G-intro}
   \text{ for } x\ge a, \ \    G(x) \le \exp\left(-\frac{(x-a)^2}{c(a+x)}\right),
   \end{equation}
and
    \begin{equation} \label{F-intro}
   \text{ for } 0 \le x \le a, \ \   F(x)  \le \exp\left(-\frac{(x-a)^2}{2ca}\right).
   \end{equation}
 \end{corollary}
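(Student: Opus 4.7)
The plan is to derive the corollary directly from Theorem \ref{theorem AB}, so the entire task is to verify the two analytic inequalities
\begin{equation*}
  \left(\frac{a}{x}\right)^{x/c} e^{(x-a)/c} \;\le\; \exp\!\left(-\frac{(x-a)^2}{c(a+x)}\right) \quad (x \ge a),
\end{equation*}
\begin{equation*}
  \left(\frac{a}{x}\right)^{x/c} e^{(x-a)/c} \;\le\; \exp\!\left(-\frac{(x-a)^2}{2ca}\right) \quad (0 \le x \le a).
\end{equation*}
By the scaling in Remark \ref{remark scaling} (or simply by replacing $(x,a)$ with $(x/c, a/c)$ throughout) it suffices to treat the case $c = 1$. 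Taking logarithms and setting $t = x/a$, the two inequalities become purely analytic statements in one variable.

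For the lower tail ($0 \le t \le 1$), after clearing factors of $a$ the target inequality is
\begin{equation*}
  t\log t - (t-1) \ge \tfrac12(t-1)^2.
\end{equation*}
I would let $h(t)$ denote the difference LHS $-$ RHS and observe $h(1) = h'(1) = 0$ while $h''(t) = 1/t - 1 > 0$ on $(0,1)$. Thus $h'$ is increasing on $(0,1]$ with $h'(1) = 0$, so $h' < 0$ on $(0,1)$, hence $h$ decreases from a positive value down to $h(1) = 0$, proving the inequality.

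For the upper tail ($t \ge 1$) the analogous reduction yields
\begin{equation*}
  t\log t - (t-1) \ge \frac{(t-1)^2}{t+1}.
\end{equation*}
This is the obstacle, because both sides and their first two derivatives agree at $t = 1$, so a naive Taylor argument does not close it, and in fact the third derivative of the difference changes sign further out. The clean route is an algebraic rearrangement: multiplying through by $(t+1)$ and then dividing by $t = 1 + s$ (with $s = t - 1 \ge 0$) collapses the inequality to the classical one-variable bound
\begin{equation*}
  \log(1+s) \ge \frac{2s}{s+2}, \qquad s \ge 0.
\end{equation*}
The difference $f(s) := \log(1+s) - 2s/(s+2)$ satisfies $f(0) = 0$ and
\begin{equation*}
  f'(s) \;=\; \frac{1}{1+s} - \frac{4}{(s+2)^2} \;=\; \frac{s^2}{(1+s)(s+2)^2} \;\ge\; 0,
\end{equation*}
which immediately gives $f \ge 0$ and closes the upper-tail comparison.

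Assembling these pieces: apply Theorem \ref{theorem AB} to bound $G$ and $F$ by $(a/x)^{x/c} e^{(x-a)/c}$, then invoke the two one-variable inequalities above (packaged as the referenced Lemmas \ref{lemma hoeffding upper} and \ref{lemma hoeffding lower}) to replace that bound by the Ghosh--Goldstein expressions, yielding \eqref{G-intro} and \eqref{F-intro}.
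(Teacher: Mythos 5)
Your proof is correct and follows essentially the same route as the paper: apply Theorem \ref{theorem AB} and then verify the two elementary one-variable inequalities, which after your substitution $s=t-1=(x-a)/a$ reduce to exactly the bounds $\log(1+s)\ge 2s/(s+2)$ and $\log v\le(v-1/v)/2$ used in Lemmas \ref{lemma hoeffding upper} and \ref{lemma hoeffding lower}. The only difference is cosmetic: you verify these by direct derivative/convexity computations, whereas the paper obtains them via Jensen's inequality and integrating $x^{-1}\le(1+x^{-2})/2$.
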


\vspace{2ex}

For a simple application of these bounds, suppose that $X = X_1+X_2+ \cdots + X_n$ is the sum of independent random variables with values in the interval $[0,c]$.  Then $X$ satisfies {\bf BSBC}, see Section \ref{sect examples}.  The estimates in Theorem \ref{theorem AB} are the standard Hoeffding inequalities, see Hoeffding \cite{hoeffding} and Section \ref{sect Hoeffding}, and the estimates in Corollary \ref{cor-intro} are those obtained by Chatterjee \cite{chatterjee} as an simple example of his results on concentration inequalities for exchangeable pairs.

\vspace{2ex}

The proofs of Theorems \ref{simple thm}, \ref{theorem gamma} and \ref{theorem AB} and Corollary \ref{cor-intro} are given in Sections \ref{sect product}, \ref{sect gamma} and \ref{sect AB}.  Some simple examples of random variables satisfying {\bf BSBC} are given in Section \ref{sect examples}, and a particular family of infinitely divisible measures satisfying {\bf BSBC}, the so called L\'evy($[0,c]$) random variables, is studied in Section \ref{sect inf div}.  Section \ref{sect bounded} deals with the relationship between monotone and bounded couplings.  Finally in Sections \ref{sect upper and lower} and \ref{sect comparison} we consider the relative strengths of the bounds in Theorems \ref{simple thm} and \ref{theorem gamma} (coming from the elementary argument in Lemma \ref{lemma one step}) as opposed to those in Theorem \ref{theorem AB} and Corollary \ref{cor-intro} (obtained via the moment generating function); the technical calculations appear in Section \ref{sect upper and lower} and the discussion is in Section \ref{sect comparison}. 

 \section{Product bounds} \label{sect product}

We start with an elementary argument giving powerful bounds on the upper and lower tail probabilities.

\begin{lemma}\label{lemma one step}
Assume {\bf BSBC}.
Then
\begin{equation}\label{upper one step}
\forall x>0, \ \      G(x) \le \frac{a}{x} \, G(x-c),
\end{equation}
and
\begin{equation}\label{lower one step}
  \forall x, \ \ F(x) \le \frac{x+c}{a} \, F(x+c).
\end{equation}
\end{lemma}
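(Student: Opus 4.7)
The plan is to derive both inequalities from two simple ingredients: the defining size bias identity $\e g(Y) = \e(X g(X))/a$ applied to a suitable indicator $g$, and the bounded coupling $Y \le X + c$ which lets us compare events under $Y$ to events under $X$.

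For the upper tail bound \eqref{upper one step}, I would take $g(t) = \mathbf{1}(t \ge x)$. On one hand, the size bias identity gives
$$
\p(Y \ge x) = \e g(Y) = \frac{\e(X \mathbf{1}(X \ge x))}{a} \ge \frac{x \, G(x)}{a},
$$
since $X \mathbf{1}(X \ge x) \ge x \mathbf{1}(X \ge x)$ pointwise. On the other hand, the coupling $Y \le X + c$ forces the inclusion $\{Y \ge x\} \subseteq \{X \ge x - c\}$, so $\p(Y \ge x) \le G(x - c)$. Chaining these two bounds on $\p(Y \ge x)$ gives $x G(x)/a \le G(x-c)$, which rearranges to \eqref{upper one step}.

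For the lower tail bound \eqref{lower one step}, the natural choice is $g(t) = \mathbf{1}(t \le x + c)$, shifted by $c$ so that the coupling lines up the other way. The coupling $Y \le X + c$ now gives the inclusion $\{X \le x\} \subseteq \{Y \le x + c\}$, hence $F(x) \le \p(Y \le x + c)$. Applying the size bias identity and the bound $X \mathbf{1}(X \le x + c) \le (x + c) \mathbf{1}(X \le x + c)$ yields
$$
\p(Y \le x + c) = \frac{\e(X \mathbf{1}(X \le x + c))}{a} \le \frac{(x + c) F(x + c)}{a},
$$
and combining the two inequalities produces \eqref{lower one step}.

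There is no real obstacle: the only thing to get right is the asymmetry between the two cases, namely that the upper tail uses the indicator at the same threshold $x$ while the lower tail needs to be evaluated at the shifted threshold $x + c$ so that the direction of the coupling turns the inclusion the right way. Everything else is the one-line consequence of the size bias Radon--Nikodym definition stated in Section \ref{sect intro}.
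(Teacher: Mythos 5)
Your proposal is correct and is essentially the paper's own argument: both use the size bias identity on an indicator together with the event inclusion forced by $Y \le X + c$, the only cosmetic difference being that for the lower tail the paper works at threshold $x$ and substitutes $x \mapsto x+c$ at the end, while you work at $x+c$ directly.
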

\begin{proof}
To prove the upper bound on $G(x)$, note that {\bf BSBC} implies that the event $Y \ge x$ is a subset of the event $X \ge x-c$.  Hence for $x>0$,
\begin{eqnarray*}
x G(x)  = x \, \e 1(X \ge x) & \le & \e(X 1(X \ge x)) \\
 &= & a \, \p(Y \ge x)
 \\
 &\le& a \,G(x-c).
\end{eqnarray*}
When $x>0$ we can divide by $x$ to get \eqref{upper one step}.

To prove the upper bound on $F(x)$, note that {\bf BSBC} implies that the event $Y \le x$ is a superset of the event $X \le x-c$.  Hence
\begin{eqnarray*}
x F(x)  = x \, \e 1(X \le x) & \ge & \e(X 1(X \le x)) \\
 &= & a \, \p(Y \le x)
 \\
 &\ge& a \,F(x-c).
\end{eqnarray*}
This does not require that $x$  be positive;  for $x<0$ it is the trivial inequality, that $0 \ge 0$.  Replacing $x$ by $x+c$ and 
dividing by $a>0$ yields \eqref{lower one step}. \qed
\end{proof}

\vspace{2ex}

\noindent {\bf Proof of Theorem \ref{simple thm}.}   Given $x>0$, the obvious strategy for obtaining good bounds is to iterate \eqref{upper one step} or
\eqref{lower one step} for as long as the new value of $x$, say $x'=x \pm i c$, still gives a favorable ratio,
$a/x'$ in \eqref{upper one step}, or $(x'+c)/a$ in \eqref{lower one step}, and using $G(t) \le 1$ or $F(t) \le 1$ as needed, to finish off.  The proof is now a simple matter of carrying out this strategy. \qed

\begin{corollary}\label{cor mgf}  Assume {\bf BSBC}.  Then the moment generating function of $X$ is finite everywhere, that is, for all $\beta \in \BR$, $M(\beta) := \e e^{\beta X} < \infty$.
\end{corollary}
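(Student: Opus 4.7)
The corollary should follow almost immediately from the tail bound in Theorem \ref{theorem gamma}, and there is really no substantive obstacle; the plan is essentially a convergence check.

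First, observe that the case $\beta \le 0$ is trivial: since $X \ge 0$ by hypothesis, $e^{\beta X} \le 1$ almost surely, hence $M(\beta) \le 1 < \infty$. So the content is in handling $\beta > 0$.

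For $\beta > 0$, I would use the standard Fubini/integration-by-parts identity valid for a nonnegative random variable,
$$
   M(\beta) = \e e^{\beta X} = 1 + \beta \int_0^\infty e^{\beta x} G(x)\, dx,
$$
and bound the integrand using Theorem \ref{theorem gamma}. For $x \ge a$ we have $G(x) \le a^{x-a}\Gamma(a+1)/\Gamma(x+1)$, so
$$
e^{\beta x} G(x) \;\le\; a^{-a}\Gamma(a+1) \cdot \frac{(a e^\beta)^{x}}{\Gamma(x+1)}.
$$
Since $\log \Gamma(x+1) \sim x\log x$ as $x \to \infty$, the right-hand side decays faster than any exponential; in particular it is eventually dominated by $x^{-2}$, so the tail of the integral converges. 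The portion $\int_0^a e^{\beta x} G(x)\, dx$ is bounded above by $a e^{\beta a}$, giving finiteness on $[0,a]$ for free.

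Alternatively, and perhaps more cleanly, one could apply Theorem \ref{theorem AB} directly: for $x \ge a$,
$$
   e^{\beta x} G(x) \;\le\; \exp\!\Bigl(\beta x - \tfrac{x}{c}\log(x/a) + \tfrac{x-a}{c}\Bigr),
$$
and the $-(x/c)\log x$ term eventually dominates $\beta x$ for any fixed $\beta$, giving the same integrability. Either way, the main point is that the $\Gamma$-type (equivalently, $x^{-x/c}$-type) decay of $G$ forces the Laplace transform to converge on the whole real line, so there is no hard step—this is a simple corollary packaged as a remark about the rapid decay already established.
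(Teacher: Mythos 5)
Your main argument is correct, but it takes a genuinely different route from the paper. The paper proves this corollary in Section \ref{sect product} directly from the product bound of Theorem \ref{simple thm}: since $u(x+c,a,c)=\frac{a}{x+c}\,u(x,a,c)$ for $x\ge a$, beyond $x_0:=2ae^{\beta c}$ the quantity $e^{\beta x}u(x,a,c)$ drops by a factor of at least $2$ each time $x$ increases by $c$; decomposing $\e e^{\beta X}$ over the blocks $[x_0+ic,\,x_0+(i+1)c)$ and comparing with a geometric series gives $M(\beta)\le e^{\beta x_0}+2e^{\beta(x_0+c)}$. Your route instead goes through the Gamma bound of Theorem \ref{theorem gamma} and the layer-cake identity $M(\beta)=1+\beta\int_0^\infty e^{\beta x}\,\p(X>x)\,dx$ (valid by Tonelli even before finiteness is known); this is legitimate, since Theorem \ref{theorem gamma} is proved from Theorem \ref{simple thm} without any reference to the moment generating function, and it yields an explicit super-exponentially decaying integrand, at the cost of invoking more machinery than the paper's elementary block-counting argument. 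Two caveats. First, you quote the Gamma bound in its $c=1$ form; under {\bf BSBC} with general $c$ you must first rescale via \eqref{scaling} (Remark \ref{remark scaling}), so the bound to use is $G(x)\le (a/c)^{(x-a)/c}\,\Gamma(a/c+1)/\Gamma(x/c+1)$ --- the decay argument is unchanged. Second, and more importantly, your ``perhaps more cleanly'' alternative via Theorem \ref{theorem AB} is circular in the paper's logical structure: Theorem \ref{theorem AB} is derived from Proposition \ref{prop mgf}, whose proof begins by citing Corollary \ref{cor mgf} to justify that $M$ is finite (and hence differentiable with $M'(\beta)=\e(Xe^{\beta X})$). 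So you may base the proof on Theorem \ref{simple thm} or Theorem \ref{theorem gamma}, but not on Theorem \ref{theorem AB}.
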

\begin{proof}
Since $X \ge 0$, trivially $M(\beta) \le 1$ if $\beta \le 0$, so assume  $\beta > 0$.  For motivation:  as $x$ increases by $c$, $e^{\beta  x}$ increases by a factor of $e^{\beta c}$, while the upper bound $u(x,a,c)$ on $G(x)$ decreases by a factor of
$a/(x+c)$;  hence for $x > x_0 := 2 a e^{\beta c}$, the product $e^{\beta  x} u(x,a,c)$ decreases by a factor of at least 2.  More precisely, for $x \ge a$ we have $u(x+c,a,c) = \frac{a}{x+c}u(x,a,c)$ and so
   $$
   e^{\beta(x+c)}u(x+c,a,c) = \left(\frac{ae^{\beta c}}{x+c}\right) e^{\beta x}u(x,a,c) \le \frac{1}{2} e^{\beta x} u(x,a,c)
   $$
for $x > x_0$.   Writing
   \begin{eqnarray*}
    M(\beta) & = & \mathbb{E}\left[ e^{\beta X}1(X < x_0)\right] + \sum_{i \ge 0} \mathbb{E} \left[e^{\beta X}1(x_0+ic \le X < x_0+ic+c)\right]\\
      & \le & \mathbb{E}\left[ e^{\beta X}1(X < x_0)\right] + \sum_{i \ge 0} e^{\beta(x_0+ic+c)} u(x_0+ic,a,c),
  \end{eqnarray*}
the series on the right side is bounded by a geometric series with ratio 1/2, and the net result will be
$M(\beta) \le  \exp(\beta x_0) + 2 \exp(\beta(x_0+c))$.
\qed \end{proof}

\begin{remark} \label{rem include}  Note the difference between the indexing in the products (\ref{upper},\,\ref{lower}): $u(x,a,c)$ \emph{includes} the factor indexed by $i=0$, while $\ell(x,a,c)$ \emph{excludes} the factor indexed by $i=0$.  In case  $x \in (a,a+c)$, which is equivalent to $a < x$ and $k=0$, the bound in \eqref{upper} has one factor, and simplifies to $G(x) \le u(x,a,c) = a/x < 1$.  In case $x \in (a-c,a)$, which is equivalent to $x < a$ and $k=0$, the bound in \eqref{lower} has no factors, and simplifies to the trivial observation $F(x) \le \ell(x,a,c)=1$.
\end{remark}

\begin{remark}  \label{rem ell j} {\bf Product bound combined with the one-sided Chebyshev inequality.}
The recursive nature of Lemma \ref{lemma one step} allows the possibility of combining it with other information about $F(x)$ or $G(x)$.  Here we pursue 
one possibility.

 For a random variable $X$ with mean $a$ and variance $\sigma^2$,  the one-sided Chebybshev inequality states that, for all $x \le a$,
$F(x) = \p(X \le x)$ 
is upper bounded by $\sigma^2/(\sigma^2+(a-x)^2)$.
In our situation, $Y =^d X^*$ and $\e X=a$ yields $\e X^2 = a \, \e Y$, and   $Y \le X+c$  implies $\e Y \le a+c$. Hence
$\e X^2 = a \, \e Y \le a(a+c)$, so that $\sigma^2 \le ac$.    Thus, under the hypotheses of Lemma \ref{lemma one step}, for all $x \le a$,
\begin{equation}\label{one sided}
   \var X \le ac,  \ \    F(x) \le \frac{ac}{ac+(a-x)^2}.
\end{equation}
We want to improve on \eqref{lower} by using one-sided
Chebyshev in combination with iteration of \eqref{lower one step}.  More precisely, given $x < a$ and any non-negative integer $j$ such that $x+jc \le a$ we can iterate \eqref{lower one step} from $x$ to $x+jc$ and then use the one-sided Chebyshev inequality at $x+jc$ to obtain
    \begin{equation} \label{lower j}
    F(x) \le \ell_j(x,a,c):=\frac{ac}{ac+(a-x-jc)^2} \prod_{0 < i \le j}\frac{x+ic}{a}.
     \end{equation}

 \end{remark}

\section{Gamma bounds}  \label{sect gamma}

Here we replace the product bounds $u(x,a,c)$ and $\ell(x,a,c)$ in Theorem \ref{simple thm} by the simpler (but weaker) bounds given in Theorem \ref{theorem gamma}.  In this section we restrict to the case $c = 1$.  Results for general $c > 0$ can be recovered using equation \eqref{scaling}, see Remark \ref{remark scaling}.  For $c=1$, making use of $z\Gamma(z) = \Gamma(z+1)$, the conclusions \eqref{upper} and \eqref{lower} in Theorem \ref{simple thm} can be rewritten as: for $x \ge a$ and $k=\lfloor x-a \rfloor$,
\begin{equation}\label{upper c 1}
    G(x) \le u(x,a,1) := \prod_{0 \le i \le k} \frac{a}{x-i} =  \frac{a^{k+1} \Gamma(x-k)}{\Gamma(x+1)},
\end{equation}
and for $0 \le x \le a$ and $k=\lfloor a-x \rfloor$,
\begin{equation}\label{lower c 1}
F(x) \le \ell(x,a,1) := \prod_{0 < i \le k} \frac{x+i}{a} =  \frac{\Gamma(x+k+1)}{a^k\Gamma(x+1)}.
\end{equation}
These upper and lower tail bounds might be viewed as too complicated;  as $x$ varies
they are not closed-form expressions, and are not analytic functions.  Here we replace them by expressions which are analytic in $a$ and $x$.

\begin{lemma} \label{lemma gamma}
 For $a > 0$ and $0 \le f \le 1$
\begin{equation}\label{gamma inequality}
     a^{1-f}\Gamma(a+f) \le \Gamma(a+1),
\end{equation}
with equality for $f=0,1$ and strict inequality for $f \in (0,1)$.
\end{lemma}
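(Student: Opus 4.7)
The plan is to reduce this to a well-known convexity property of the Gamma function. First I would dispatch the boundary cases by direct calculation: at $f=0$ the left side is $a\,\Gamma(a)=\Gamma(a+1)$, and at $f=1$ the left side is $a^{0}\Gamma(a+1)=\Gamma(a+1)$, so equality holds in both cases.

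For the interior, I would take logarithms. Define
$$g(f) := (1-f)\log a + \log \Gamma(a+f), \qquad f \in [0,1].$$
Then the inequality \eqref{gamma inequality} is equivalent to $g(f) \le \log \Gamma(a+1)$, and the boundary computation says exactly that $g(0) = g(1) = \log\Gamma(a+1)$. Since the digamma function $\psi(z) = \Gamma'(z)/\Gamma(z)$ is strictly increasing on $(0,\infty)$, we have
$$g''(f) = \psi'(a+f) > 0,$$
so $g$ is strictly convex on $[0,1]$. A strictly convex function that agrees at the two endpoints of an interval lies strictly below its endpoint values on the interior, so $g(f) < g(0) = \log\Gamma(a+1)$ for $f \in (0,1)$, giving strict inequality in \eqref{gamma inequality}.

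An alternative route, which I would mention as a remark, applies H\"older's inequality directly to the integral representation: writing $t^{a-1+f}e^{-t} = \bigl(t^{a-1}e^{-t}\bigr)^{1-f}\bigl(t^{a}e^{-t}\bigr)^{f}$ and applying H\"older with conjugate exponents $1/(1-f)$ and $1/f$ yields
$$\Gamma(a+f) \le \Gamma(a)^{1-f}\,\Gamma(a+1)^{f} = a^{f-1}\,\Gamma(a+1),$$
which rearranges to \eqref{gamma inequality}. Strict inequality for $f\in(0,1)$ follows because the two integrands $t^{a-1}e^{-t}$ and $t^{a}e^{-t}$ are not proportional.

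There is no real obstacle here: both routes are routine once one recognizes that \eqref{gamma inequality} is a log-convexity statement (equivalently, a Bohr--Mollerup-type interpolation). The only point requiring minor care is the strictness, which in the convexity proof comes from $\psi' > 0$ and in the H\"older proof comes from non-proportionality of the integrands.
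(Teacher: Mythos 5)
Your proof is correct. Your secondary remark is in fact exactly the paper's proof: the paper writes $t^{a+f-1}e^{-t} = \bigl(t^{a-1}e^{-t}\bigr)^{1-f}\bigl(t^{a}e^{-t}\bigr)^{f}$, applies H\"older with exponent $p=1/(1-f)$ to get $\Gamma(a+f) < \Gamma(a)^{1-f}\Gamma(a+1)^{f}$ for $f\in(0,1)$, and finishes with $a\Gamma(a)=\Gamma(a+1)$, handling $f=0,1$ by inspection. Your primary route is a mild variant: instead of invoking H\"older directly, you phrase the inequality as a statement about the function $g(f)=(1-f)\log a+\log\Gamma(a+f)$, check equality of the endpoint values, and use strict convexity of $\log\Gamma$ (via $g''(f)=\psi'(a+f)>0$) to conclude $g(f)<g(0)$ on the interior. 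The two arguments are really the same log-convexity fact seen from two sides — H\"older on the integral representation is the standard proof that $\log\Gamma$ is convex — so the difference is mainly in what you take as the black box: your main route assumes differentiability of $\log\Gamma$ and positivity of the trigamma function, while the paper's (and your remark's) route needs only the integral formula and H\"older, with strictness coming from non-proportionality of the integrands, as you correctly note. Both handle the equality cases and the strictness correctly; there is no gap.
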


\begin{proof}  The result is true (with equality) when $f = 0$ or $1$, so we can assume $0 < f < 1$.  We use the integral formula $\Gamma(x) = \int_0^\infty t^{x-1}e^{-t}dt$ for $x > 0$.   Writing $t^{a+f-1} = (t^{a-1})^{1-f} (t^a)^f$ and using H\"older's inequality (with $p = 1/(1-f)$) gives
   $$\Gamma(a+f)  < \left[\Gamma(a)\right]^{1-f}\left[\Gamma(a+1)\right]^f.
   $$
Since $ a \Gamma(a) = \Gamma(a+1)$ we get
    $$a^{1-f}\Gamma(a+f) < \left[a\Gamma(a)\right]^{1-f}\left[\Gamma(a+1)\right]^f = \Gamma[a+1],
   $$
and we are done.  \qed \end{proof}

\vspace{2ex}

\noindent {\bf Proof of Theorem \ref{theorem gamma}.}
(i) Let $k = \lfloor x-a \rfloor$ and $f= x-a-k \in [0,1)$.  Since $c=1$ and $x \ge a$, this is consistent with the notation in Theorem \ref{simple thm}.  Note that $x-k=a-f$, and combine \eqref{upper c 1} with  \eqref{gamma inequality}.

\noindent (ii)  Let $k = \lfloor a-x \rfloor$ and $f = a-x-k \in [0,1).$  Since $c=1$ and $a \ge x$, this is consistent with the notation in Theorem \ref{simple thm}.  Replacing $f$ by $1-f$ in \eqref{gamma inequality} gives $a^f \Gamma(a-f+1) \le \Gamma(a+1)$.  Combining this with \eqref{lower c 1} and noting that $x+k=a-f$ gives the result.  \qed

\section{Moment generating function} \label{sect AB}

We recall our notation, $M(\beta) := \e e^{\beta X}$ for the moment generating function of $X$.  We observe that if $X$ is Poisson with parameter $a$, then $\C(\beta) = a(e^{\beta}-1)$, and $X$ admits a $c$-bounded size bias coupling with $c=1$, so that in this case, the inequality
\eqref{mgf} holds as an equality for all $\beta$.

\begin{proposition} \label{prop mgf}
Assume {\bf BSBC}.  Then the moment generating function $M(\beta)$ for $X$ satisfies
    \begin{equation} \label{mgf}
    \C(\beta) \le  \frac{a}{c}\left(e^{\beta c}-1\right)
    \end{equation}
for all $\beta \in \BR$.
    \end{proposition}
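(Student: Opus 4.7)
The plan is to use the defining property of size biasing together with the coupling to derive a differential inequality for $\log M(\beta)$, and then integrate.

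First I would verify that $M$ is everywhere differentiable with $M'(\beta)=\e[Xe^{\beta X}]$. This follows from Corollary~\ref{cor mgf}: since $M$ is finite in a neighborhood of any $\beta_0$, the integrand $Xe^{\beta X}$ is dominated in that neighborhood by a constant times $e^{\beta' X}$ for some $\beta'$ slightly larger than $|\beta_0|$, so differentiation under the expectation is justified. Next, invoking the size-biasing identity $\e[Xg(X)]=a\,\e[g(Y)]$ with $g(t)=e^{\beta t}$ yields the key identity
\begin{equation*}
M'(\beta)=a\,\e[e^{\beta Y}].
\end{equation*}

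Now I would exploit the coupling $Y\le X+c$. For $\beta\ge 0$ this gives $e^{\beta Y}\le e^{\beta c}e^{\beta X}$, hence $M'(\beta)\le a\,e^{\beta c}M(\beta)$, which rearranges to
\begin{equation*}
\frac{d}{d\beta}\log M(\beta)\le a\,e^{\beta c}.
\end{equation*}
For $\beta<0$ the inequality $e^{\beta Y}\ge e^{\beta c}e^{\beta X}$ holds instead, giving the reverse bound $\frac{d}{d\beta}\log M(\beta)\ge a\,e^{\beta c}$. In both regimes I would integrate from $0$ to $\beta$, using $\log M(0)=0$: the $\beta\ge 0$ case gives $\log M(\beta)\le \int_0^{\beta}a\,e^{tc}\,dt=\frac{a}{c}(e^{\beta c}-1)$ directly, while for $\beta<0$ integrating the reverse inequality from $\beta$ to $0$ yields $-\log M(\beta)\ge \frac{a}{c}(1-e^{\beta c})$, which is equivalent to the desired bound.

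I do not expect a serious obstacle here; the only technical point to be careful about is the justification of differentiating $M$ and switching expectation with derivative, which is handled by the everywhere-finiteness of $M$ from Corollary~\ref{cor mgf}. The sign-flip when $\beta$ is negative is a minor bookkeeping issue, but as shown above it conspires to produce the same closed-form upper bound on both sides of $0$, consistent with the Poisson case being the equality case.
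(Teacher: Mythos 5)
Your proposal is correct and follows essentially the same route as the paper: finiteness of $M$ from Corollary~\ref{cor mgf}, the identity $M'(\beta)=a\,\e[e^{\beta Y}]$ from size biasing, the coupling inequality applied separately for $\beta\ge 0$ and $\beta<0$ to bound $(\log M)'$, and integration from $0$. The sign analysis for negative $\beta$ matches the paper's argument, so no gaps remain.
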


\begin{proof}  We know from Corollary \ref{cor mgf} that the moment generating function $M(\beta)$ is finite for all $\beta \in\BR$.  It follows that $M$ is continuously differentiable and $ M'(\beta) = \e(Xe^{\beta X})$.  Moreover, since $Y$ is the size biased version of $X$, we have $\e(Xe^{\beta X})  = a \e(e^{\beta Y})$.  Together we have
    $
    M'(\beta)  = a \e(e^{\beta Y}).
    $

For $\beta  \ge 0$ we have $e^{\beta Y} \le e^{\beta (X+c)} = e^{\beta c}e^{\beta X}$ so that
       $$
       M'(\beta ) = a \e(e^{\beta Y}) \le ae^{\beta c} \e(e^{\beta X}) = a e^{\beta c} M(\beta ).
       $$
Then
      $$
     (\log M)'(\beta ) \le a e^{\beta c}
      $$
so that
$$ 
   \C(\beta) = \C(\beta)-\C(0)  \le \int_0^\beta  a e^{uc}\,du = \frac{a}{c}\left(e^{\beta c}-1\right).
$$ 
  for all $\beta \ge 0$.

For $\beta  \le 0$ we have $e^{\beta Y} \ge e^{\beta (X+c)} = e^{\beta c}e^{\beta X}$ so that
       $$
       M'(\beta ) = a \e(e^{\beta Y}) \ge ae^{\beta c} \e(e^{\beta X}) = a e^{\beta c} M(\beta ).
       $$
Then
      $$
      (\C)'(\beta ) \ge a e^{\beta c}
      $$
so that
    $$
  -\C(\beta) = \C(0)  - \C(\beta ) \ge \int_\beta ^0 a e^{uc}\,du = \frac{a}{c}\left(1-e^{\beta c}\right),
    $$
  and the proof is complete.
     \qed \end{proof}

The upper tail and lower tail bounds in Theorem \ref{theorem AB} can now be obtained using the standard ``large deviation upper bound'' method together with the information about $M(\beta)$ in Proposition \ref{prop mgf}.  Here are the details.

 \vspace{2ex}

\noindent {\bf Proof of Theorem \ref{theorem AB}.}  Suppose first $x \ge a$.  For any $\beta  \ge 0$ we have
   $$
   \p (X \ge x) = \p(e^{\beta X} \ge e^{\beta x}) \le M(\beta )/e^{\beta x} \le \exp\left\{ \frac{a}{c}\left(e^{\beta c}-1 \right)-\beta x \right\}.
   $$
Choosing $\beta  = (1/c) \log(x/a) \ge 0$ we get
   $$
   \p(X \ge x) \le  \exp\left\{ \frac{a}{c}\left(\frac{x}{a}-1\right)-\frac{x}{c} \log\left(\frac{x}{a}\right)  \right\}
    = e^{(x-a)/c} \left(\frac{a}{x}\right)^{x/c}.
    $$
Now suppose $0 \le x \le a$.  For any $\beta  \le 0$ we have
   $$
   \p (X \le x) = \p(e^{\beta X} \ge e^{\beta x}) \le M(\beta )/e^{\beta x} \le \exp\left\{ \frac{a}{c}\left(e^{\beta c}-1 \right)-\beta x \right\}.
   $$
If $0 < x \le a$ take $\beta  = (1/c) \log(x/a) \le 0$ to get
   $$
   \p(X \le x) \le  \exp\left\{ \frac{a}{c}\left(\frac{x}{a}-1\right)-\frac{x}{c} \log\left(\frac{x}{a}\right)  \right\}
    = e^{(x-a)/c} \left(\frac{a}{x}\right)^{x/c},
    $$
while if $x = 0$ let $\beta \to -\infty$ to get $\p(X \le 0) \le e^{-a/c}$.
\qed

\vspace{2ex}

The passage from Theorem \ref{theorem AB} to Corollary \ref{cor-intro} is a direct application of the following two lemmas.

\begin{lemma} \label{lemma hoeffding upper} Suppose $0 < a \le x$ and $c >0$.  Then
     \begin{equation} \label{hoeffding upper tail GG}
    \left(\frac{a}{x}\right)^{x/c} e^{(x-a)/c} \le e^{-(x-a)^2/(ca+cx)},
    \end{equation}
with strict inequality whenever $x > a$.   \end{lemma}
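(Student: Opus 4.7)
\textbf{Proof proposal for Lemma \ref{lemma hoeffding upper}.}  Taking logarithms of both sides and multiplying through by $c$, the inequality \eqref{hoeffding upper tail GG} is equivalent to
\[
 h(x) := x \log(x/a) - (x-a) - \frac{(x-a)^2}{x+a} \ge 0 \quad \text{for } x \ge a,
\]
with strict inequality for $x>a$.  Note that the dependence on $c$ has dropped out, so only two parameters $a$ and $x$ remain.  My plan is to show $h(a)=h'(a)=0$ and then that $h''(x)>0$ for all $x>a$; two integrations then give $h(x)>0$ for $x>a$.

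Differentiating once,
\[
 h'(x) = \log(x/a) - \frac{(x-a)(x+3a)}{(x+a)^2},
\]
and both terms vanish at $x=a$.  Differentiating again, the $(x-a)(x+3a)/(x+a)^2$ term simplifies nicely because $(x+a)^2 - (x-a)(x+3a) = 4a^2$; a short calculation then gives
\[
 h''(x) = \frac{1}{x} - \frac{8a^2}{(x+a)^3} = \frac{(x+a)^3 - 8a^2 x}{x(x+a)^3}.
\]

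The key step is to show the numerator is nonnegative on $[a,\infty)$ and strictly positive on $(a,\infty)$.  Since $x=a$ is visibly a root of $(x+a)^3-8a^2 x$, polynomial division yields the factorization
\[
 (x+a)^3 - 8a^2 x = (x-a)\,(x^2 + 4ax - a^2).
\]
For $x \ge a>0$, we have $x^2 + 4ax - a^2 \ge a^2+4a^2-a^2 = 4a^2 > 0$, so the numerator is nonnegative and vanishes only at $x=a$.  Hence $h''(x) \ge 0$ on $[a,\infty)$ with equality only at $x=a$, which combined with $h(a)=h'(a)=0$ gives the desired conclusion.

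The main obstacle is simply spotting the right manipulations: the identity $(x+a)^2 - (x-a)(x+3a) = 4a^2$ used to simplify $h''$, and the factorization of $(x+a)^3-8a^2 x$.  Neither is deep, but without them the expressions look unwieldy.  An alternative substitution $r=(x-a)/a$ reduces the inequality to $(1+r)\log(1+r) - r \ge r^2/(2+r)$ on $r\ge 0$, which is aesthetically cleaner and can be handled by the same two-derivative strategy; I would use whichever form fits the surrounding exposition better.
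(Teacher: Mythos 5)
Your proof is correct, but it reaches the key scalar inequality by a different route than the paper. After the shared (and trivial) reduction --- take logarithms and cancel $c$ --- both arguments must show $x\log(x/a)-(x-a)\ge (x-a)^2/(x+a)$ for $x\ge a$, strictly for $x>a$. The paper gets this in one line from the classical logarithm bound $\log(1+u)>2u/(2+u)$ for $u>0$, itself obtained by applying Jensen's inequality to $t\mapsto 1/t$ on $[1,1+u]$; taking $u=(x-a)/a$, multiplying by $x$ and rearranging finishes the job. You instead run a second-derivative analysis of the difference $h$: the values $h(a)=h'(a)=0$, the computation $h''(x)=\tfrac1x-\tfrac{8a^2}{(x+a)^3}$ via the identity $(x+a)^2-(x-a)(x+3a)=4a^2$, and the factorization $(x+a)^3-8a^2x=(x-a)(x^2+4ax-a^2)$, whose second factor exceeds $4a^2$ for $x\ge a$, all check out, as does the strictness claim. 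What the paper's route buys is brevity and a matching companion trick (integrating $x^{-1}\le (1+x^{-2})/2$ plays the same role in the lower-tail Lemma \ref{lemma hoeffding lower}); what yours buys is a purely mechanical verification requiring no pre-packaged inequality, and the same $h(a)=h'(a)=0$, $h''>0$ template adapts to the lower-tail case (there with the simpler $g''(x)=\tfrac1x-\tfrac1a$ on $(0,a)$, arguing from the right endpoint). Either argument would serve.
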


\begin{proof} The expressions both take the value 1 when $x = a$, so we can assume $x > a$.  Jensen's inequality applied to the function $f(x) = 1/x$ on the interval $[1,1+u]$ gives $\log(1+ u) > 2u/(2+u)$ for $u > 0$.  Taking $u = (x-a)/a$ we get $\log(x/a) > 2(x-a)/(x+a)$ and so
   $$
   x \log(a/x) +x-a < \frac{-2x(x-a)}{x+a} + x-a = -\frac{(x-a)^2}{x+a}.
   $$
Dividing by $c$ and applying the exponential function to both sides gives \eqref{hoeffding upper tail GG}.
\qed \end{proof}

\begin{lemma}  \label{lemma hoeffding lower} Suppose $0 \le x \le a$ and $a,c >0$.  Then
     \begin{equation} \label{hoeffding lower tail GG}
    \left(\frac{a}{x}\right)^{x/c} e^{(x-a)/c} \le e^{-(x-a)^2/(2ca)},
    \end{equation}
with strict inequality whenever $x < a$.
    \end{lemma}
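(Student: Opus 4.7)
My plan is to mirror the structure of the proof of Lemma \ref{lemma hoeffding upper}: take logarithms of both sides of \eqref{hoeffding lower tail GG} and multiply through by the positive constant $c$, so that the claim reduces to the purely analytic inequality
\begin{equation*}
  x \log(a/x) + (x-a) \le -\frac{(x-a)^2}{2a}, \qquad 0 \le x \le a,
\end{equation*}
with strict inequality when $x<a$. The variables $c$ disappears, so the problem is a one-parameter calculus statement. Since both sides vanish at $x=a$, it is natural to move everything to one side and define
\begin{equation*}
  g(x) := x \log(a/x) + (x-a) + \frac{(x-a)^2}{2a}, \qquad 0 \le x \le a,
\end{equation*}
and show $g \le 0$ on $[0,a]$ with equality only at $x=a$.

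I would proceed by differentiation. A direct computation gives $g(a) = 0$ and
\begin{equation*}
  g'(x) = \log(a/x) + \frac{x-a}{a}, \qquad 0 < x \le a,
\end{equation*}
so the task reduces to showing $g'(x) > 0$ on $(0,a)$. For this, set $t = a/x > 1$ and invoke the elementary one-variable inequality $\log t > 1 - 1/t$ valid for all $t > 1$ (which follows either from integrating $1/s > 1/t$ on $[1,t]$ or from the Taylor series). This gives $\log(a/x) > 1 - x/a = (a-x)/a$, i.e.\ $g'(x) > 0$, so $g$ is strictly increasing on $(0,a]$. Combined with $g(a) = 0$ this yields $g(x) < 0$ on $(0,a)$.

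The only loose end is the boundary case $x=0$, which is not in the domain of $\log(a/x)$ but is in the domain of the original statement. Here I would appeal to the standard limit $x \log(a/x) \to 0$ as $x \to 0^+$, which shows $g$ extends continuously to $x=0$ with $g(0) = -a/2 < 0$, so the desired strict inequality persists. Alternatively, expanding $(1-v)\log(1-v) + v$ in powers of $v := (a-x)/a \in [0,1]$ gives the clean identity $(1-v)\log(1-v)+v = \sum_{n\ge 2} v^n/(n(n-1))$, whose $n=2$ term is exactly $v^2/2$ and whose remaining terms are nonnegative; this route makes the strict inequality and the boundary behavior manifest at the same time. I do not anticipate a real obstacle here: the estimate is tight only at $x=a$, and a first-derivative comparison via the standard $\log$ inequality is enough to capture it.
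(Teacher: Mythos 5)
Your proof is correct, and it reaches the same reduced inequality as the paper, namely $x\log(a/x)+x-a\le -(x-a)^2/(2a)$ with strictness for $x<a$, but by a somewhat different mechanism. The paper finishes in one line with a pointwise bound: integrating $s^{-1}\le (1+s^{-2})/2$ from $1$ to $v$ gives $\log v < (v-1/v)/2$ for $v>1$, and substituting $v=a/x$ and multiplying by $x$ yields the displayed inequality directly, after which one divides by $c$ and exponentiates. You instead differentiate the difference function $g$ and invoke the more standard bound $\log t>1-1/t$; since $-g(x)=\int_x^a\bigl[\log(a/s)+(s-a)/a\bigr]\,ds$, your monotonicity argument is an integrated form of essentially the same estimate, trading the paper's slightly less familiar key inequality for one extra (correctly computed) derivative. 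Your series identity $(1-v)\log(1-v)+v=\sum_{n\ge 2}v^n/\bigl(n(n-1)\bigr)$ with $v=(a-x)/a$ is a third valid route, and it has the merit of exhibiting the exact remainder beyond the quadratic term. A minor point in your favor: the paper's substitution $v=a/x$ tacitly requires $x>0$, and the endpoint $x=0$ (where the convention $(a/x)^{x/c}=1$ is in force) is left to the reader, whereas your observation that $g$ extends continuously with $g(0)=-a/2<0$, or equivalently the series evaluated at $v=1$, covers that case explicitly.
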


\begin{proof}  The expressions both take the value 1 when $x = a$, so we can assume $x < a$.  Integrating the inequality $x^{-1} \le (1+x^{-2})/2$ from $1$ to $v $ gives
$\log v < (v-1/v)/2$ for $v > 1$.  Putting $v=a/x$ gives
    $$
    x \log(a/x) -a+x \le \frac{x}{2}\left(\frac{a}{x}-\frac{x}{a}\right)-a+x = -\frac{(x-a)^2}{2a}.
    $$
Dividing by $c$ and applying the exponential function to both sides gives \eqref{hoeffding lower tail GG}.
\qed \end{proof}

\section{Examples admitting a 1-bounded size bias coupling} \label{sect examples}

Here we give some examples of random variables $X$ which satisfy {\bf BSBC} with $c = 1$.  Examples with general $c > 0$ can be obtained by scaling, see Remark \ref{remark scaling}.

\begin{example} \label{example poisson}  If $X$ has a Poisson distribution, it can be verified directly that $X+1 =^d X^*$.  Conversely if $X \ge 0$ with $0 < \e X <\infty$ and $X+1 =^d X^*$, then the inequality in the statement (and proof) of Proposition  \ref{prop mgf} can be replaced by equality, so that $X$ must have a Poisson distribution.

\end{example}

\begin{remark}[Sharpness of the bounds]
\label{remark sharp upper}
Suppose $X$ is Poisson distributed with mean $a \in(0,\infty)$.  Taking $x = 0$ in the lower tail bound \eqref{AB lower} from Theorem \ref{theorem AB} we get $F(0) \le e^{-a}$ whereas the exact value is $F(0) = e^{-a}$.  Therefore in this setting the lower tail bound \eqref{AB lower} from Theorem \ref{theorem AB} is sharp.

Now suppose further that $a \in (0,1]$.  When $x=n$, the upper tail bound \eqref{upper} in Theorem \ref{simple thm} simplifies, with $k=n-1$, to
$G(x) \le u(n,a,1)= a^n/n!$,
while for large $n$, $G(n) \sim \p(X=n) = e^{-a} a^n/n!$.  Hence, for large $x$ with $x$ an integer, the upper bound \eqref{upper} is sharp up to a factor of approximately $e^{a}$.  Letting $a \to 0$ so that $e^{a} \to 1$, one sees that the upper bound \eqref{upper}, for large $x$, is sharp up to a factor arbitrarily close to 1.
\end{remark}

\begin{example} \label{example inf div} L\'evy([0,1]), the infinitely divisible distributions with L\'evy measure supported on $[0,1]$.  This is the case $c=1$ of the L\'evy([0,$c$]) distributions, discussed in detail in Section \ref{sect inf div}.
\end{example}

\begin{example} \label{example 01}  Let $X$ be a random variable with values in $[0,1]$, with $\e X > 0$.  This includes, but is not restricted to, Bernoulli random variables.  The size biased version $Y$, say, of $X$ takes values in $[0,1]$ also, and hence $Y \le 1 \le 1+X$, so that $X$ admits a 1-bounded size bias coupling.
\end{example}

\begin{example} \label{example uniform}
The uniform distribution on $[0,b]$ admits a 1-bounded size bias coupling for any $b \in (0,4]$.
\end{example}
\begin{proof}
Suppose that $X$ is uniformly distributed on $[0,1]$, and that $Y$ is the size biased version of $X$.  On $[0,1]$,
the density of $X$ is $f_X(x)=1$, the density of $Y$ is $f_Y(x)=2x$, the cumulative distribution functions are  $F_X(t)=t$ and $F_Y(t)=t^2$, and the inverse cumulative distribution functions are $F_X^{(-1)}(u)=u$ and $F_Y^{(-1)}(u)=\sqrt{u}$ for $0 \le u \le 1$.  Observe that $\max_{0 \le u \le 1} (\sqrt{u}-u) = \frac{1}4$, achieved at $u=\frac{1}4$.  Hence, using the quantile transform as in the proof of Lemma \ref{lem-xy}, there is a $c$-bounded size bias coupling for the standard uniform with  $c=\frac{1}4$.  By scaling, as in Remark \ref{remark scaling}, the uniform distribution on $[0,b]$ admits a $b/4$-bounded size bias coupling.
\qed \end{proof}

\begin{proposition} \label{prop indep sum}  Suppose $X = \sum_{i} X_i$ is the sum of finitely or countably many independent non-negative random variables $X_i$ with $0 < \e X = \sum_i \e X_i <\infty$.  If each $X_i$ admits a 1-bounded size bias coupling, then so does $X$.
\end{proposition}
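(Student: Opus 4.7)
The plan is to exploit the standard recipe for size biasing a sum of independent random variables: pick a random index $I$ with probabilities proportional to $\e X_i / \e X$, and replace $X_I$ by an independent copy of its size biased version, leaving the other summands alone. A short calculation (well known, see \cite{AGK}) verifies that if $I$ is chosen independently of $(X_i)_i$ and $(X_i^*)_i$ with $\p(I=i) = \e X_i/\e X$, and if for each $i$ we have $X_i^* =^d (X_i)^*$ independent of $\{X_j : j \neq i\}$, then
$$
 X^* := X_I^* + \sum_{j \neq I} X_j
$$
has the size biased distribution of $X$. In the countable case, the hypothesis $\sum_i \e X_i < \infty$ makes the distribution of $I$ a genuine probability measure on the index set, and all sums converge almost surely.

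To produce a $1$-bounded coupling, I would invoke the hypothesis on each $X_i$ to obtain, on a suitable probability space, a joint construction of $X_i$ together with $X_i^* =^d (X_i)^*$ satisfying $X_i^* \le X_i + 1$. Build the entire family $\{(X_i,X_i^*) : i \}$ mutually independent across $i$ (enlarging the probability space if necessary), and draw $I$ independently of this family. Then take $X = \sum_i X_i$ and $X^*$ as above, so that $X$ and $X^*$ live on the same probability space and $X^* =^d X^*$ in distribution (by the recipe recalled in the previous paragraph).

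The coupling inequality is then immediate: on the event $\{I=i\}$,
$$
 X^* - X = X_I^* - X_I = X_i^* - X_i \le 1,
$$
so $X^* \le X + 1$ almost surely, which is the desired $1$-bounded size bias coupling for $X$.

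The only real care needed is measure-theoretic bookkeeping in the countably infinite case — checking that $X^*$ is well-defined almost surely, that the sum $\sum_{j \neq I} X_j$ converges, and that the recipe really does produce the size biased law of $X$. Both follow from $\e X = \sum_i \e X_i < \infty$ by a standard argument conditioning on $I$ and using independence, so I expect no substantive obstacle beyond arranging the joint probability space.
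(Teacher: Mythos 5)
Your proposal is correct and is essentially the paper's own proof: choose $I$ with $\p(I=i)=\e X_i/\e X$ independently of everything else, replace $X_I$ by its size-biased version coupled so that $X_I^*\le X_I+1$, and cite the standard lemma (as in \cite{AGK}) that this produces the size-biased law of $X$, giving $Y\le X+1$. The measure-theoretic care you flag for the countable case is handled in the paper simply by the hypothesis $\sum_i \e X_i<\infty$, exactly as you indicate.
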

\begin{proof}  For each $i$ there is a coupling of $X_i$ with its size biased version $Y_i$, say, so that $Y_i \le X_i+1$.  Let $I$ denote an index $i$ chosen (independently of all the $X_j$ and $Y_j$) with probability $\e X_i/\e X$.  Then $Y:=X-X_I+Y_I$ is the size biased version of $X$, see \cite[Lemma 2.1]{GR96} or \cite[Sect. 2.4]{AGK}.  Since $Y_i \le X_i +1$ for all $i$, it follows that $Y \le X+1$.
\qed \end{proof}

\begin{example} \label{example binomial}  By Proposition \ref{prop indep sum}, if $X$ is Binomial, or more generally if $X$ is the sum of (possibly infinitely many) independent Bernoulli random variables (with possibly different parameters) with $0 < \e X < \infty$ then $X$ has a 1-bounded size bias coupling.
\end{example}

\begin{example} \label{example collisions}
Let $C(b,n)$ denote the number of collisions which occur when $b$ balls are tossed into $n$ equally likely bins.  (A collision is said to occur whenever a ball lands in a bin that already contains at least one ball.)  Clearly $C(b,n)$ can be expressed as the sum of dependent Bernoulli random variables.  In \cite[Prop 15]{AGKillian} it is shown that $C(b,n)$ admits a 1-bounded size bias coupling.

The paper \cite{AGKillian} studies the number $B(\kappa,n)$ of balls that must be tossed into $n$ equally likely bins in order to obtain $\kappa$ collisions. For the sake of concrete analysis, we suppose here that  $\kappa \sim n^\alpha$ for some fixed $\alpha \in (1/2,1)$;  in this case, the goal is to prove that the variance of $B(\kappa,n)$ is asymptotic to $n/2$.    R\'enyi's central limit theorem \cite{Renyi}
for the number of collisions $C(b,n)$ obtained with a given number $b$ of balls, combined with duality, leads easily to a central limit theorem for the number of balls $B(\kappa,n)$ that must be tossed;   the hard part, for getting the asymptotic variance, requires uniform integrability, to be obtained from concentration inequalities for $C(b,n)$.
The traditional tool, the Azuma-Hoeffding bounded difference martingale approach \cite{Azuma},  leads to
$\p( |C(b,n)- \e C(b,n) | \ge t) \le 2 \exp( -t^2/(2b))$.  In contrast, bounded size bias coupling inequalities such as Corollary \ref{cor-intro} involve $a = \e C(b,n)$ rather than $b$ in the denominator of the fraction in the exponential.  The situation in \cite{AGKillian} has $a/b \to 0$; it turns out that Azuma-Hoeffding inequality is inadequate in this setting, whereas the bounded size bias coupling inequalities are strong enough, see \cite[Sect 7]{AGKillian} for details.
  \end{example}

\section{L\'evy($[0,c]$), the infinitely divisible distributions with L\'evy measure supported on $[0,c]$} \label{sect inf div}

Among the distributions satisfying {\bf BSBC}
are those with characteristic function of the form
\begin{equation}\label{comp pois 1}
 \phi_X(u) := \e e^{i u X}  =\exp \left( a \left( i u \,  \alpha(\{0\}) +\  \int_{(0,c]} \frac{e^{iuy}-1}{y} \
   \alpha(dy) \right) \right)
\end{equation}
where $a \in (0,\infty)$ and  $\alpha$ is the probability distribution of a nonnegative nonzero random variable $D$, with $\p(D\in [0,c])=1$.  Given this characteristic function, the random variable $X$ has  $a=\e X$, and, with $X,D$ independent, $X^*=^d X+D$.  See \cite{AGK} for the connection between infinite divisibility of $X$ and size biasing of $X$ by adding an independent $D$.  Special cases include:
\begin{enumerate}
 \item $c=1, \p(D=1)=1; X$ is Poisson with mean $a$.

 \item $c=1,a=1, D$ is uniformly distributed on $(0,1)$; $X$ has density $f(x) = e^{-\gamma} \rho(x)$ where $\rho$ is Dickman's function and $\gamma$ is Euler's constant.

 \end{enumerate}
The characteristic function in \eqref{comp pois 1} can also be expressed as
 \begin{equation}\label{comp pois 2}
 \phi_X(u)  =
   \exp \left(   i a u \alpha_0+  \int_{(0,c]} \left(e^{iuy}-1 \right)  \gamma(dy) \right)
\end{equation}
Here $\gamma$ is a nonnegative measure on $(0,\infty)$, with $\gamma(dy) /\alpha(dy) = a/y$, and may be called  the L\'evy measure of the infinitely divisible random variable $X$;  see Sato \cite[Section 51]{sato}, Bertoin \cite[Section 1.2]{bertoin}, or \cite{AGK}.


\begin{theorem}\label{thm universal}
Suppose $X$ has distribution given by \eqref{comp pois 1} for $c >0$, and that for every $\varepsilon > 0$,
the probability measure $\alpha$ is
\emph{not} supported on $[0,c-\varepsilon]$.  Then $G(x) := \p(X \ge x)$ satisfies, as $x \to \infty$,
\begin{equation}\label{universal bound}
      G(x) \approx x^{-x/c}, \text{ that is}, \ \frac{\log G(x)}{(x/c) \log x} \to -1.
\end{equation}
\end{theorem}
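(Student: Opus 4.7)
The assertion has two halves. The upper half $\limsup \log G(x)/((x/c)\log x) \le -1$ is just \eqref{log G}, which applies because $X$ satisfies {\bf BSBC}: the remark following \eqref{comp pois 1} gives the size-bias identity $X^* =^d X + D$ with $D \in [0,c]$ independent of $X$, so $Y := X + D$ is a $c$-bounded coupling. All the work therefore goes into the matching lower bound $\liminf \log G(x)/((x/c)\log x) \ge -1$.

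For this lower bound my plan is to extract, by independent factorization of the characteristic function \eqref{comp pois 2}, the piece of $X$ whose jumps lie in $[c-\delta,c]$. Fix $\delta\in(0,c)$ and set $\lambda_\delta := \gamma([c-\delta,c])$. The nondegeneracy hypothesis on $\alpha$ gives $\alpha([c-\delta,c])>0$, hence $\lambda_\delta>0$; and $\lambda_\delta \le a/(c-\delta) < \infty$ since $\alpha$ is a probability measure. Splitting $\gamma = \gamma|_{[c-\delta,c]} + \gamma|_{(0,c-\delta)}$ factors \eqref{comp pois 2} as the product of two infinitely-divisible characteristic functions with nonnegative support, so in distribution $X = X^\delta + R$ with $X^\delta$ and $R$ independent and nonnegative, where $X^\delta = \sum_{j=1}^N D_j$ is compound Poisson with $N\sim\mathrm{Poisson}(\lambda_\delta)$ and i.i.d.\ jumps $D_j\in[c-\delta,c]$.

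Since $R\ge 0$, this gives $X \ge (c-\delta)N$ almost surely. For large $x$, setting $n := \lceil x/(c-\delta)\rceil$, we therefore have
\[
G(x) \;\ge\; \p(N \ge n) \;\ge\; \p(N = n) \;=\; e^{-\lambda_\delta}\,\lambda_\delta^{\,n}/n!.
\]
Taking logarithms and applying Stirling to write $\log n! = n\log n - n + O(\log n)$, the leading term on the right is $-n\log n \sim -(x/(c-\delta))\log x$ as $x\to\infty$; the $\lambda_\delta$-dependent contributions are linear in $n$ and hence negligible against $n\log n$. Dividing by $(x/c)\log x$ gives $\liminf \log G(x)/((x/c)\log x) \ge -c/(c-\delta)$, and letting $\delta\downarrow 0$ yields the desired bound $\ge -1$, matching the $\limsup$.

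The one point that needs care is the factorization $X =^d X^\delta + R$ when the full L\'evy measure $\gamma$ is infinite near $0$: since the truncated piece $\gamma|_{[c-\delta,c]}$ is finite and supported away from $0$, its characteristic exponent is bona fide compound Poisson, and the remaining factor (which can absorb the drift $a\alpha(\{0\})\ge 0$) is still a nonnegative infinitely divisible distribution on $[0,\infty)$; independence of the two summands follows from the multiplicative splitting of characteristic functions. Beyond that, the argument is the standard one-deviation lower bound for compound Poisson tails, which matches the upper bound precisely because the hypothesis on $\alpha$ forces some jump mass arbitrarily close to the extremal size $c$.
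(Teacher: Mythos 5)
Your proposal is correct and follows essentially the same route as the paper: the upper half comes from {\bf BSBC} via \eqref{log G}, and the lower half isolates the Poisson-distributed count of jumps in $[c-\delta,c]$ (positive mean by the nondegeneracy of $\alpha$), bounds $X$ below by $(c-\delta)$ times that count, and uses $\log\p(N=n)\sim -n\log n$ before letting $\delta\downarrow 0$. The only cosmetic difference is that you extract the large-jump component by factoring the characteristic function, whereas the paper realizes $X$ directly as a drift plus the arrivals of a Poisson process with intensity $\gamma$ on $(0,c]$; the substance is identical.
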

\begin{proof}
As observed in the Introduction, the upper bound on $G(x)$ follows directly and easily from \eqref{upper} in Theorem \ref{simple thm} and \eqref{gamma upper 1} in Theorem \ref{theorem gamma}.

For the lower bound, let $\varepsilon>0$ be given, with $\varepsilon < c$.
Using the representation \eqref{comp pois 2}, the random variable $X$ can be realized as the constant $a \alpha_0$ plus the sum of the arrivals in the Poisson process $\X$ on $(0,c]$ with intensity measure $\gamma$.
Let $Z$ be the number of arrivals of $\X$ in $[c-\varepsilon,c]$.    We have $X \ge (c - \varepsilon) Z$.   This yields
$$
   G(x) = \p(X \ge x)  \ge  \p(Z \ge \frac{x}{c-\varepsilon}).
$$
Finally,
$Z$ is   a Poisson random variable with mean $\lambda = \gamma([c-\varepsilon,c] ) \ge (a/c) \alpha([c-\varepsilon,c]) > 0$. We recall an elementary calculation: for integers $k \to \infty$,  the Poisson ($\lambda$) distribution for $Z$ has $\p(Z \ge k) \ge  \p(Z=k)$ with
\begin{equation}\label{hide lambda}
  \log \p(Z=k) = -\lambda + k \log \lambda - \log k! \sim - \log k! \sim -k \log k.
\end{equation}
We use this with $k = \lceil x/(c-\varepsilon) \rceil \sim x/(c-\varepsilon)$ and $\log k \sim \log x$.
\qed \end{proof}

\begin{remark} Most probabilists are familiar with the $\approx$ notation of \eqref{universal bound},
with  $a_n \approx b_n$ defined to mean  $\log a_n \sim \log b_n$, for use in the context where $a_n$ grows or decays exponentially.  The standard example is the large deviation statement that
for i.i.d.\ sums, $\p (S_n \ge a n ) \approx \exp(-n I(a) )$, and the $\approx$ relation \emph{hides}  factors with a slower than exponential order of growth or decay, in this case $1/\sqrt{n}$.  When both $a_n$ and $b_n$ grow or decay even faster, for example with $a_n \sim n^{\pm n/c}$, an unfamiliar phenomenon arises, with $\approx$ hiding factors which grow or decay exponentially fast.  One example appears in the conclusion \eqref{universal bound} of Theorem \ref{thm universal}, where $x^{-x/c} \approx (x/c)^{-x/c} \approx 1/ \Gamma(x/c)$ as $x \to \infty$  --- with the last expression being relevant because  it corresponds to the Gamma function upper bound in Theorem \ref{theorem gamma}.  A second example occurs in
 \eqref{hide lambda}, where at first it appears strange that the parameter $\lambda$ does not appear on the right hand side  --- this reflects the fact that
for any fixed $\lambda, \lambda' >0$,  when $Z$ and $Z'$ are Poisson with parameters $\lambda, \lambda'$ respectively, $\p (Z=k) \approx \p (Z'=k)$ as $k \to \infty$.  The first example hides the exponentially growing factor  $c^{x/c}$,  and the second hides the factor $(\lambda' / \lambda)^k$.
\end{remark}

\section{Bounded coupling, monotone coupling, and a sandwich principle}\label{sect bounded}

Suppose that the distributions of random variables $X,Y$ have been specified, with cumulative distribution functions $F_X,F_Y$ respectively. We will clarify the relations
between hypotheses of the form
\begin{eqnarray}
\label{c bounded one side}  \exists \text{ a coupling},  & &  \p(Y \le X+c)=1, \\
\label{c bounded}      \exists \text{ a coupling}, & &  \p( |Y-X| \le c)=1,  \\
\label{c bounded monotone}     \exists \text{ a coupling},  & &  \p(Y \in [X,X+c])=1,  \\
\label{monotone}     \exists \text{ a coupling},   & &  \p( X \le Y)=1.
\end{eqnarray}
It is well-known that  if $Y =^d X^*$, then  \eqref{monotone} holds.   We observe that
if $Y =^d X^*$, then \eqref{c bounded one side} is the hypothesis {\bf BSBC}, while \eqref{c bounded} and \eqref{c bounded monotone} are used as 
hypotheses for the first results on 
concentration via size bias couplings, in
\cite{GhoshGold}.

\begin{proposition} \label{prop-equiv} Given that \eqref{monotone} holds, all of
\eqref{c bounded one side} -- \eqref{c bounded monotone} are equivalent.
\end{proposition}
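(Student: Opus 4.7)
The plan is to observe that the only nontrivial implication is \eqref{c bounded one side} $\Rightarrow$ \eqref{c bounded monotone}: the chain \eqref{c bounded monotone} $\Rightarrow$ \eqref{c bounded} $\Rightarrow$ \eqref{c bounded one side} is immediate from the pointwise inclusions $\{Y \in [X,X+c]\} \subseteq \{|Y-X| \le c\} \subseteq \{Y \le X+c\}$, which are valid in any coupling. To establish the remaining direction I would exhibit a single quantile (comonotonic) coupling that simultaneously realizes $X \le Y$ and $Y \le X + c$.

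First I would translate each of \eqref{monotone} and \eqref{c bounded one side} into a pointwise statement about the left-continuous inverse CDFs $F_X^{-1}(u) := \inf\{t : F_X(t) \ge u\}$ and $F_Y^{-1}(u)$. By the classical equivalence between stochastic dominance and the existence of a monotone coupling, hypothesis \eqref{monotone} is equivalent to $F_X(t) \ge F_Y(t)$ for all $t \in \BR$, and hence to $F_X^{-1}(u) \le F_Y^{-1}(u)$ for all $u \in (0,1)$. For \eqref{c bounded one side}, the inclusion $\{X+c \le t\} \subseteq \{Y \le t\}$ on the given probability space gives $F_Y(t) \ge F_X(t-c)$ for all $t$; using right-continuity of $F_X$ together with the definition of $F_X^{-1}$, one sees that this is equivalent to $F_Y^{-1}(u) \le F_X^{-1}(u) + c$ for all $u \in (0,1)$.

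Next, take a single $U$ uniform on $(0,1)$ and define the quantile coupling $\hX := F_X^{-1}(U)$ and $\hat{Y} := F_Y^{-1}(U)$. Then $\hX =^d X$ and $\hat{Y} =^d Y$, and the two pointwise inequalities from the previous step combine on the probability space carrying $U$ to give $\hX \le \hat{Y} \le \hX + c$ almost surely, which is precisely \eqref{c bounded monotone}.

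The main (and rather mild) obstacle is the quantile bookkeeping: verifying carefully that the left-continuous inverse correctly converts both a plain inequality and a shift-by-$c$ inequality between CDFs into the corresponding pointwise inequalities between inverses, including the borderline case where jumps of $F_X$ and $F_Y$ align. No deeper measure-theoretic issues arise, since the quantile coupling is available for arbitrary Borel distributions on $\BR$ and both inequalities are simultaneously driven by the same uniform $U$.
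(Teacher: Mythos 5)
Your proof is correct and follows essentially the same route as the paper: the easy chain of implications is dispatched pointwise, and the nontrivial direction is handled by converting \eqref{monotone} and \eqref{c bounded one side} into CDF (equivalently, inverse-CDF) inequalities and then realizing both simultaneously with a single quantile coupling driven by one uniform variable, which is exactly the paper's Lemma \ref{lem-xy} (i) $\Rightarrow$ ii) $\Rightarrow$ iv)) applied with $b=0$.
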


The proof of Proposition \ref{prop-equiv} will be given later in this section.

\subsection{One sided bounds}
Among the four hypotheses \eqref{c bounded one side} -- \eqref{monotone}, only \eqref{monotone} is standard.  The relation of stochastic domination, that $X$ lies below $Y$, written $X \preceq Y$, is usually \emph{defined} by the condition
\begin{equation}\label{domination}
\forall t,  F_X(t) \ge F_Y(t),
\end{equation}
and it is well known that \eqref{domination} is equivalent to \eqref{monotone}.  See for example Lemma \ref{lem-xy}.  Stochastic domination is often considered in the more general context where
$X,Y$ are random elements in a \emph{partially ordered} set, see for example \cite{Liggett}.

\subsection{Two sided bounds: historical perspective}

Dudley \cite[Prop 1 and Thm 2]{Dudley68}, following earlier work of Strassen \cite{Strassen65}, proved the following result.  Given distributions $\mu$ and $\nu$ respectively for random variables $X$ and $Y$ taking values in a Polish metric space $(S,d)$ and $\alpha > 0$, write $A^\alpha = \{ x :  d(x,A) \le \alpha \}$  to denote the closed $\alpha$ neighborhood of the set $A \subset S$.  Then the following are equivalent:
\begin{eqnarray} \label{dudley 1}
& & \text{For all closed sets } A,  \ \ \mu(A) \le \nu(A^\alpha)   \\[1ex]
\label{dudley 1a}
&  &\text{For all closed sets } A,  \ \ \nu(A) \le \mu(A^\alpha)  \\[1ex]
& & \label{dudley 2}
\text{There exists a coupling under which } \p(d(X,Y) \le \alpha) = 1.
\end{eqnarray}
When  $X,Y$ are real valued random variables, and $\alpha = c$, condition \eqref{c bounded} is trivially equivalent to  \eqref{dudley 2}, and \eqref{dudley 1} or \eqref{dudley 1a} can be shown to be equivalent to 
$\forall t, \ F_Y(t-c) \le F_X(t) \le F_Y(t+c)$.

\subsection{Proof of Proposition \ref{prop-equiv}}

 It is clear that \eqref{c bounded monotone} implies \eqref{c bounded} implies \eqref{c bounded one side}.  It remains to show that \eqref{monotone} and \eqref{c bounded one side} together imply \eqref{c bounded monotone}, and this follows immediately by using the implication i) implies iii) with $b = 0$ in the following Lemma.

\begin{lemma} \label{lem-xy} For random variables $X,Y$ with cumulative distribution functions $F_X, F_Y$ respectively and $b,c \in \BR$, the following are equivalent:

i)  There exist a coupling such that $\p(X \le Y+b) =1$ and a coupling such that $\p(Y \le X+c) = 1$

ii)  For all $t,  F_Y(t-b) \le F_X(t) \le F_Y(t+c)$

iii)  There exists a coupling such that $\p(X-b \le Y \le X+c)=1$

iv) There exists a coupling such that $X-b \le Y \le X+c$ for all $\omega$.
\end{lemma}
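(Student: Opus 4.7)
\textbf{Proof plan for Lemma \ref{lem-xy}.} I would establish the cyclic implications (iv) $\Rightarrow$ (iii) $\Rightarrow$ (i) $\Rightarrow$ (ii) $\Rightarrow$ (iv). The first two steps are essentially bookkeeping: (iv) is a pointwise strengthening of the almost-sure statement (iii), and the single coupling supplied by (iii) simultaneously witnesses each of the two one-sided bounds appearing in (i).

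The implication (i) $\Rightarrow$ (ii) is a one-line calculation for each inequality. From a coupling with $\p(X \le Y+b)=1$ one obtains
\[
F_Y(s) = \p(Y \le s) \le \p(X \le s+b) = F_X(s+b)
\]
for all $s$; setting $t = s+b$ yields the left half of (ii). Symmetrically, the coupling with $\p(Y \le X+c)=1$ gives $F_X(t) \le F_Y(t+c)$ for all $t$.

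The heart of the lemma, and the step I expect to require the most thought, is (ii) $\Rightarrow$ (iv): a single joint coupling realizing both bounds pointwise must be extracted from two purely distributional inequalities. My plan is to invoke the quantile (monotone) coupling. Let $U$ be uniform on $(0,1)$, let $F^{-1}(u) := \inf\{s : F(s) \ge u\}$ denote the generalized inverse of a CDF $F$, and set $X := F_X^{-1}(U)$ and $Y := F_Y^{-1}(U)$ on the same probability space. Using the standard equivalence $F^{-1}(u) \le t \iff u \le F(t)$ (valid for any right-continuous nondecreasing $F$), the distributional inequality $F_Y(s) \le F_X(s+b)$ translates directly into the pointwise inequality $F_X^{-1}(u) \le F_Y^{-1}(u)+b$, i.e., $X \le Y+b$ for every outcome; the symmetric argument converts $F_X(t) \le F_Y(t+c)$ into $Y \le X+c$ everywhere. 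Under this single quantile coupling both bounds hold simultaneously and pointwise, giving $X-b \le Y \le X+c$ for all $\omega$, which is (iv).

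The only subtlety is justifying the equivalence $F^{-1}(u) \le t \iff u \le F(t)$ at jump points and flat sections of $F$; the left-continuous generalized inverse combined with the right-continuity of CDFs (so that $F(F^{-1}(u)) \ge u$) is precisely what makes the translation between distributional and pointwise inequalities clean, and I expect this bookkeeping to be the only place the argument requires any care.
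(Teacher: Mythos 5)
Your proposal is correct and follows essentially the same route as the paper: the same cycle of implications (with (iv)$\Rightarrow$(iii)$\Rightarrow$(i) handled as trivialities, the same one-line CDF calculation for (i)$\Rightarrow$(ii), and the same quantile-transform coupling with a single uniform $U$ for (ii)$\Rightarrow$(iv)). Your extra care with the equivalence $F^{-1}(u)\le t \iff u\le F(t)$ just spells out the monotonicity step the paper asserts without detail.
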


\begin{proof}  To prove that i) implies ii) we use the coupling in which $\p(X \le Y+b) =1$ and calculate
     $$
     F_Y(t-b) = \p(Y \le t-b) \le \p(X \le t) + \p(X > Y+b) = \p(X \le t) = F_X(t)
     $$
and similarly for $F_X(t) \le F_Y(t+c)$.

To show ii) implies iv) we may use the quantile transform to couple $X$ and $Y$ to a single random variable $U$, uniformly distributed in (0,1).  This transform is written informally as
$$
    X(\omega) := F_X^{(-1)}(U(\omega)), \  Y(\omega) := F_Y^{(-1)}(U(\omega)) $$
and more formally as $X(\omega) := \inf \{t:  F_X(t) \ge U(\omega) \}$, $Y(\omega) := \inf \{t:  F_Y(t) \ge U(\omega) \}$.
Under this coupling, $F_Y(t-b) \le F_X(t)$ for all $t$ implies $X(\omega) \le Y(\omega)+b$ for all $\omega$, and $F_X(t) \le F_Y(t+c)$ for all $t$ implies $Y(\omega) \le X(\omega)+c$ for all $\omega$.  Together ii) implies $ X(\omega)-b \le Y(\omega) \le X(\omega)+c $ holds for all $\omega$.

   Finally, iv) implies iii) trivially, and  iii) implies i) a fortiori.
\qed \end{proof}

\subsection{A sandwich principle}

\begin{corollary} {\bf Sandwich principle}. \label{cor sandwich}  Suppose $X,Y,Z$ are random variables with cumulative distribution functions $F_X, F_Y, F_Z$ satisfying $F_Z(t) \le F_Y(t) \le F_X(t)$ for all $t$.  If there is a $c$-bounded coupling of $X$ and $Z$, as defined via \eqref{c bounded}, then there exists a $c$-bounded monotone coupling of $X$ and $Y$, as defined by \eqref{c bounded monotone}.
\end{corollary}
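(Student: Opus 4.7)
The plan is to realize all three variables on a common probability space via the quantile transform and read off the sandwich $X \le Y \le Z \le X+c$ directly.

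First I would apply Lemma \ref{lem-xy} to the pair $(X,Z)$. The assumption of a $c$-bounded coupling means $\p(|X-Z|\le c)=1$ for some coupling, which implies $\p(X \le Z+c) = 1$ and $\p(Z \le X+c) = 1$. By the implication i) $\Rightarrow$ ii) in Lemma \ref{lem-xy} (with roles $b=c$ and $c=c$), this gives $F_Z(t-c) \le F_X(t) \le F_Z(t+c)$ for all $t$. In particular I retain the inequality $F_X(t) \le F_Z(t+c)$. Combined with the hypothesis $F_Z(t) \le F_X(t)$, I now have $F_Z(t) \le F_X(t) \le F_Z(t+c)$ for all $t$, which is condition ii) of Lemma \ref{lem-xy} applied to the pair $(Z,X)$ with $b=0$ (interpreting the lemma with $X$ and $Y$ replaced by $Z$ and $X$ respectively).

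Next I would couple all three variables $X,Y,Z$ to a single uniform $U \in (0,1)$ via the quantile transform $X(\omega):=F_X^{(-1)}(U(\omega))$, $Y(\omega):=F_Y^{(-1)}(U(\omega))$, $Z(\omega):=F_Z^{(-1)}(U(\omega))$. Because $F_Z(t) \le F_Y(t) \le F_X(t)$ for all $t$, the corresponding quantile functions satisfy the reverse ordering, so pointwise $X(\omega) \le Y(\omega) \le Z(\omega)$. Moreover, by the implication ii) $\Rightarrow$ iv) in Lemma \ref{lem-xy} applied to the pair $(Z,X)$ with the inequalities obtained above, this same quantile coupling satisfies $Z(\omega) \le X(\omega)+c$ for every $\omega$.

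Putting these together yields $X(\omega) \le Y(\omega) \le Z(\omega) \le X(\omega)+c$ pointwise, so in particular $\p(Y \in [X, X+c]) = 1$, which is the $c$-bounded monotone coupling \eqref{c bounded monotone} of $X$ and $Y$.

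The only potentially subtle step is the application of Lemma \ref{lem-xy} with the roles of $X$ and $Y$ swapped and with the correct choice of $b,c$; this is a bookkeeping matter rather than a genuine obstacle. The crucial underlying fact that the paper has already established—that the quantile coupling is simultaneously the tightest in both directions whenever a two-sided bounded coupling exists—is what makes the proof essentially immediate once the CDF inequality $F_X(t) \le F_Z(t+c)$ is extracted from the hypothesis.
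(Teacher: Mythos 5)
Your proof is correct and uses essentially the same machinery as the paper: you extract $F_X(t) \le F_Z(t+c)$ by applying Lemma \ref{lem-xy} (i $\Rightarrow$ ii) to the pair $(X,Z)$, and then conclude via the quantile-transform coupling from the proof of that lemma. The only cosmetic difference is that you carry $Z$ along in a simultaneous three-variable quantile coupling to read off the pointwise sandwich $X \le Y \le Z \le X+c$, whereas the paper chains the CDF inequalities $F_Y(t) \le F_X(t) \le F_Z(t+c) \le F_Y(t+c)$ and applies Lemma \ref{lem-xy} once more to the pair $(X,Y)$; the two routes are equivalent.
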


\begin{proof}  Using Lemma \ref{lem-xy} with $Z$ in place of $Y$, the $c$-bounded coupling of $X$ and $Z$ implies $F_X(t) \le F_Z(t+c)$ for all $t$.  Then $F_Y(t) \le F_X(t) \le F_Z(t+c) \le F_Y(t+c)$ for all $t$, and Lemma \ref{lem-xy} gives the existence of the $c$-bounded monotone coupling of $X$ and $Y$.
\qed \end{proof}

\paragraph{\bf Application}
As an illustration of the sandwich principle, we prove the following corollary.  The special case where
$X$ is Binomial$(m,p)$ for $p \in (0,1)$ and $m \ge 1$ was proved in \cite[Lemmas 3.2, 3.3]{GP}
by an explicit calculation.  The sandwich principle enables a short proof for the general case, with no calculation.

\begin{corollary}\label{cor condition sandwich}
Suppose $X$ satisfies {\bf BSBC}, with $c=1$, and the distribution of $Y$ is defined to be that of $X$, conditional on $X > 0$.
Then there is a coupling in which $Y-X \in [0,1]$ for all $\omega$.
\end{corollary}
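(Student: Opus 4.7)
The plan is to apply the sandwich principle (Corollary \ref{cor sandwich}) with the size-biased version $X^*$ playing the role of the upper random variable $Z$. That is, I would show $F_{X^*}(t) \le F_Y(t) \le F_X(t)$ for every $t$, and exhibit a $1$-bounded coupling of $X$ with $X^*$; the sandwich principle will then produce a $1$-bounded monotone coupling of $X$ with $Y$, which is exactly the desired statement $Y - X \in [0,1]$ almost surely.

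The inequality $F_Y(t) \le F_X(t)$ is immediate: for $t \ge 0$, conditioning on $\{X > 0\}$ strips off the atom at $0$, so $F_X(t) - F_Y(t) = \p(X=0)(1-F_Y(t)) \ge 0$, and for $t<0$ both sides vanish. The key observation for the other inequality $F_{X^*}(t) \le F_Y(t)$ is that size biasing is insensitive to the atom at zero, so $Y^* =^d X^*$. Indeed, $\p(X^* \in dx) = (x/a)\p(X \in dx)$ gives zero mass at $x=0$, and for $x > 0$ one has $\p(X \in dx) = \p(X>0)\,\p(Y \in dx)$ together with $\e Y = a/\p(X>0)$, so the two size-biased densities agree. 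Once this is in hand, the classical fact that $Y \preceq Y^*$ for any nonnegative $Y$ (because the Radon--Nikodym derivative $y/\e Y$ is increasing) yields $Y \preceq Y^* =^d X^*$, i.e.\ $F_{X^*}(t) \le F_Y(t)$.

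For the coupling of $X$ and $X^*$, \textbf{BSBC} with $c=1$ provides a coupling realizing $X^* \le X+1$, while $X \preceq X^*$ holds universally. Lemma \ref{lem-xy} (with $b=0$, $c=1$) then upgrades these two one-sided coupling statements into a single coupling with $X \le X^* \le X+1$, which is in particular a $1$-bounded coupling in the sense of \eqref{c bounded}. Invoking Corollary \ref{cor sandwich} with $Z := X^*$ now delivers a $1$-bounded monotone coupling of $X$ and $Y$.

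I do not anticipate any serious obstacle; the only point requiring a moment's thought is the identity $Y^* =^d X^*$, which is the clean conceptual fact that makes the sandwich argument sidestep the explicit computations used in the Binomial case in \cite{GP}.
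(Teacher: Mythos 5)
Your proof is correct and follows essentially the same route as the paper: invoke the sandwich principle with $Z=X^*$, using that the size-biased distribution of $X$ coincides with that of $Y$ (so $Y \preceq Z$) together with the \textbf{BSBC} coupling upgraded via Lemma \ref{lem-xy} to a bounded monotone coupling of $X$ and $Z$. You merely spell out the steps (the atom-at-zero computation, $Y^*=^d X^*$, and the use of Lemma \ref{lem-xy}) that the paper labels as trivial or leaves implicit.
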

\begin{proof}
Trivially, $Y$ stochastically dominates $X$.  Take the distribution of $Z$ to be the size biased distribution of $X$.  By assumption, there is a coupling in which $\p(Z-X \in [0,1])=1$.
Trivially, the distribution of  $Z$, initially defined as the size-biased distribution of $X$, is also the size-biased distribution of $Y$. Hence $Z$ dominates $Y$, and the sandwich principle applies with $c = 1$.
\qed \end{proof}

This result may be applied to any of the random variables $X$ discussed in Section \ref{sect examples}, and in particular to those obtained using Proposition \ref{prop indep sum}.  If $X$ is (non-negative) integer valued, for example, a Binomial random variable or a sum of independent Bernoulli random variables, then $Y$ is also integer valued and the conclusion of Corollary \ref{cor condition sandwich} is easily strengthened to $Y-X \in\{0,1\}$ for all $\omega$.

\section{Analysis of the upper tail and lower tail bounds} \label{sect upper and lower}

In this section we obtain results enabling us to compare the upper and lower tail bounds in Theorem \ref{theorem gamma} with those in Theorem \ref{theorem AB}.

\begin{lemma}   For $0 < u \le v$
       \begin{equation} \label{log gamma}
          \frac{\Gamma(v+1/2)}{\Gamma(u+1/2)} \ge \frac{v^v}{e^{v-u}u^u}.
        \end{equation}
\end{lemma}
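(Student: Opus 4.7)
My plan is to take logarithms and reduce the inequality to the monotonicity of a single function. Set $H(t) := \log \Gamma(t+\tfrac12) - t \log t + t$ for $t > 0$; then the claim reads $H(v) \ge H(u)$, so it suffices to prove $H$ is nondecreasing on $(0,\infty)$. Differentiating gives $H'(t) = \psi(t+\tfrac12) - \log t$, where $\psi = \Gamma'/\Gamma$ is the digamma function. I would deduce $H'(t) > 0$ for all $t > 0$ by combining two facts: (a) $H'(t) \to 0$ as $t \to \infty$, and (b) $H'$ is strictly decreasing.

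Fact (a) follows from the standard Stirling asymptotic $\psi(x) = \log x - \tfrac{1}{2x} + O(x^{-2})$ together with $\log(t+\tfrac12) - \log t \to 0$. For (b), I would show that $H''(t) = \psi'(t+\tfrac12) - 1/t < 0$. Using the series representation $\psi'(x) = \sum_{n \ge 0}(n+x)^{-2}$, the summands $f(n) = (n+t+\tfrac12)^{-2}$ come from a strictly convex function $f(s) = (s+t+\tfrac12)^{-2}$ on $[-\tfrac12,\infty)$. The midpoint rule (equivalently, the tangent-line inequality at the midpoint) gives $f(n) < \int_{n-1/2}^{n+1/2} f(s)\,ds$ for each $n \ge 0$, and summing yields
\[
\psi'(t+\tfrac12) = \sum_{n \ge 0} f(n) < \int_{-1/2}^{\infty} \frac{ds}{(s+t+\tfrac12)^2} = \frac{1}{t},
\]
so $H''(t) < 0$.

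Combining (a) and (b), $H'$ strictly decreases on $(0,\infty)$ with limit $0$, forcing $H'(t) > 0$; hence $H$ is strictly increasing and the lemma follows, with strict inequality whenever $u < v$. The main technical step is (b), the midpoint-rule comparison for the trigamma function; a clean alternative would use the Laplace-transform representations $\psi'(x) = \int_0^\infty s\,e^{-xs}/(1-e^{-s})\,ds$ and $1/t = \int_0^\infty e^{-ts}\,ds$, which together give $H''(t) = \int_0^\infty \bigl(s/(2\sinh(s/2)) - 1\bigr)\,e^{-ts}\,ds$, whose integrand is negative because $2\sinh(s/2) > s$ for $s > 0$.
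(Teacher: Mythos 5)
Your proof is correct, but it takes a genuinely different route from the paper's. Both arguments ultimately rest on the inequality $\psi(t+\tfrac12) \ge \log t$ (that is, $H'(t) \ge 0$ in your notation), which is then integrated from $u$ to $v$. The paper establishes this directly at the level of the first derivative: it subtracts the Frullani integral for $\log x$ from Gauss' integral formula for the digamma function and observes that the resulting integrand, $e^{-t(x+1/2)}\bigl(2\sinh(t/2)-t\bigr)/\bigl(t(1-e^{-t})\bigr)$, is nonnegative because $2\sinh(t/2)\ge t$. You instead go one derivative deeper: you prove the trigamma inequality $\psi'(t+\tfrac12) < 1/t$ by an elementary midpoint-rule (convexity) comparison applied to the series $\sum_{n\ge 0}(n+t+\tfrac12)^{-2}$, and then recover $H'>0$ from $H''<0$ together with the asymptotic $H'(t)\to 0$. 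Your route avoids Gauss' formula — the series representation of $\psi'$ and the midpoint rule are quite elementary — but pays for it by invoking the Stirling-type expansion of $\psi$ (though only the crude form $\psi(x)=\log x+o(1)$ is really needed) and by the extra monotonicity-to-a-limit step; the paper's computation is shorter and self-contained once Gauss' formula is granted, giving the sign of $H'$ in one stroke. Your closing remark, with the Laplace-transform representation of $\psi'$, shows the two arguments meet at the same elementary fact $2\sinh(s/2)>s$ for $s>0$, just one derivative apart, and as a bonus your version yields strict inequality whenever $u<v$.
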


\begin{proof}
For $x > 0$, using Gauss' formula, see \cite[Sect 12.3]{whittaker},
    $$
   ( \log \Gamma)'(x+ 1/2) = \int_0^\infty \left(\frac{e^{-t}}{t} - \frac{e^{-t(x+1/2)}}{1-e^{-t}}\right)\,dt
    $$
together with
   $$
        \log x = \int_0^\infty \left( \frac{e^{-t} - e^{-tx}}{t}\right)\,dt
        $$
we obtain
   \begin{eqnarray*}
   ( \log \Gamma)'(x+1/2) -  \log x
     & = & \int_0^\infty \left(\frac{e^{-tx}}{t} - \frac{e^{-t(x+1/2)}}{1-e^{-t}}\right)\,dt\\
     & = & \int_0^\infty \frac{e^{-t(x+1/2)}(2 \sinh (t/2) -t)}{t(1-e^{-t})}\,dt \\
     & \ge & 0.
   \end{eqnarray*}
Therefore for $0 < u \le v$,
  \begin{eqnarray*}
\frac{\Gamma(u+1/2)}{\Gamma(v+1/2)}
    & \ge & \exp\left\{ \int_u^v \log x \,dx \right\} = \exp\left\{ \left[\rule{0cm}{2.5ex}x \log x - x\right]_u^v\right\} = \frac{v^v}{e^{v-u}u^u}.
 \end{eqnarray*}
 \qed \end{proof}

\begin{proposition} \label{prop exponential} For $0 < a \le x$,
\begin{eqnarray}
\frac{a^{x-a} \ \Gamma(a+1)}{\Gamma(x+1)} & \le & \frac{\left(a+\frac{1}{2}\right)^{a+1/2}(ae)^{x-a}}{\left(x+\frac{1}{2}\right)^{x+1/2}} \label{exponential 1}\\
    & \le & \left(\frac{a+\frac{1}{2}}{x+\frac{1}{2}}\right)^{1/2} \left(\frac{a}{x}\right)^x e^{x-a}. \label{exponential 2}
\end{eqnarray}
\end{proposition}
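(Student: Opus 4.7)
The plan is to handle the two inequalities separately, using the preceding Lemma for the first, and a monotonicity argument for the second.

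For the first inequality, I would rewrite $\Gamma(a+1) = \Gamma((a+1/2) + 1/2)$ and $\Gamma(x+1) = \Gamma((x+1/2)+1/2)$, which is the identity that makes the $1/2$-shift in the target expression natural. Applying the preceding Lemma with $u = a + 1/2$ and $v = x + 1/2$ (noting that $u \le v$ since $a \le x$) gives
\[
   \frac{\Gamma(x+1)}{\Gamma(a+1)} \;\ge\; \frac{(x+1/2)^{x+1/2}}{e^{x-a}(a+1/2)^{a+1/2}}.
\]
Taking reciprocals and multiplying by $a^{x-a}$ yields \eqref{exponential 1} immediately.

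For the second inequality, I would first cancel the common factor $e^{x-a}$ and the factor $(a+1/2)^{1/2}$ from both sides, reducing the desired estimate to
\[
    \frac{(a+1/2)^{a}\, a^{x-a}}{(x+1/2)^{x}} \;\le\; \frac{a^{x}}{x^{x}},
\]
which after rearrangement is equivalent to
\[
    \Bigl(1 + \tfrac{1}{2a}\Bigr)^{a} \;\le\; \Bigl(1 + \tfrac{1}{2x}\Bigr)^{x}.
\]
Thus the task reduces to showing that $f(t) := t \log\!\bigl(1 + \tfrac{1}{2t}\bigr)$ is increasing on $(0,\infty)$. A direct computation gives $f'(t) = \log(1+s) - s/(1+s)$ with $s = 1/(2t)$, and the elementary inequality $\log(1+s) \ge s/(1+s)$ for $s \ge 0$ (verified by differentiation, as both sides agree at $s=0$ and the difference has positive derivative $s/(1+s)^2$) delivers $f'(t) \ge 0$.

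I do not expect any serious obstacles. The only subtle point is spotting the correct bookkeeping for the first inequality: rewriting $a+1$ and $x+1$ as shifts of $1/2$ from $a+1/2$ and $x+1/2$ so that the preceding Lemma applies cleanly. After that, the second inequality is just algebra and a short convexity-type calculation.
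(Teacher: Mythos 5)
Your proof is correct and follows essentially the same route as the paper: \eqref{exponential 1} is obtained exactly as in the paper, by applying the preceding Lemma with $u=a+\tfrac12$, $v=x+\tfrac12$ and rearranging. For \eqref{exponential 2} the paper instead observes that $t\mapsto (a+t)^a/(x+t)^x$ is decreasing on $[0,\infty)$ and compares $t=\tfrac12$ with $t=0$, while you prove the equivalent reduced inequality $\bigl(1+\tfrac{1}{2a}\bigr)^a\le\bigl(1+\tfrac{1}{2x}\bigr)^x$ via the monotonicity of $t\mapsto t\log\bigl(1+\tfrac{1}{2t}\bigr)$; these are interchangeable elementary-calculus steps, so the arguments are substantively the same.
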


\begin{proof}  Taking $u = a+1/2$ and $v=x+1/2$ in \eqref{log gamma}, we get
  $$ \frac{\Gamma(x+1)}{\Gamma(a+1)} \ge \frac{(x+\frac{1}{2})^{x+1/2}}{e^{x-a}(a+\frac{1}{2})^{a+1/2}},
  $$
giving \eqref{exponential 1}.   A simple calculus argument shows that $t \mapsto  (a+t)^a/(x+t)^x$ is decreasing for $t \in[0,\infty)$.  In particular $\left(a+\frac{1}{2}\right)^a/\left(x+\frac{1}{2}\right)^x \le a^a/x^x$, and this gives \eqref{exponential 2}.
 \qed \end{proof}

\begin{proposition} \label{prop lower exponential} For $0 < x <a$,
\begin{eqnarray}
\frac{\Gamma(a+1)}{a^{a-x} \Gamma(x+1)} & \ge & \frac{\left(a+\frac{1}{2}\right)^{a+1/2}}{(ae)^{a-x}\left(x+\frac{1}{2}\right)^{x+1/2}} \label{lower exponential 1}\\
    & \ge & \left(\frac{a+\frac{1}{2}}{x+\frac{1}{2}}\right)^{1/2} \left(\frac{a}{x}\right)^x e^{x-a}. \label{lower exponential 2}
\end{eqnarray}
\end{proposition}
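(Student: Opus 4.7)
The plan is to mirror the proof of Proposition \ref{prop exponential}, exploiting the fact that since the chain of inequalities now points the opposite way ($\ge$ instead of $\le$), the hypothesis $x<a$ gives exactly the sign flip needed to keep the Gamma lemma \eqref{log gamma} and the monotonicity argument on $t \mapsto (a+t)^a/(x+t)^x$ working in our favor.

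For \eqref{lower exponential 1}, I would apply the lemma \eqref{log gamma} with $u = x + 1/2$ and $v = a + 1/2$ (the condition $0 < u \le v$ holds since $0 < x < a$). This yields
\begin{equation*}
\frac{\Gamma(a+1)}{\Gamma(x+1)} \ge \frac{(a+1/2)^{a+1/2}}{e^{a-x}(x+1/2)^{x+1/2}}.
\end{equation*}
Dividing both sides by $a^{a-x}$ and noting $a^{a-x} e^{a-x} = (ae)^{a-x}$ yields \eqref{lower exponential 1}.

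For \eqref{lower exponential 2}, after dividing the LHS of \eqref{lower exponential 1} by the RHS of \eqref{lower exponential 2} and simplifying, the required inequality reduces to
\begin{equation*}
\left(1 + \frac{1}{2a}\right)^{a} \ge \left(1 + \frac{1}{2x}\right)^{x},
\end{equation*}
which is equivalent to $(a+1/2)^a/(x+1/2)^x \ge a^a/x^x$. I would establish this the same way the original Proposition \ref{prop exponential} argument does, by looking at $h(t) := (a+t)^a/(x+t)^x$ for $t \ge 0$ and computing the logarithmic derivative
\begin{equation*}
\frac{h'(t)}{h(t)} = \frac{a}{a+t} - \frac{x}{x+t} = \frac{t(a-x)}{(a+t)(x+t)}.
\end{equation*}
In the regime $x < a$ this is nonnegative, so $h$ is \emph{increasing} on $[0,\infty)$; comparing the values at $t=0$ and $t=1/2$ delivers exactly the inequality needed. (Contrast this with Proposition \ref{prop exponential}, where $a \le x$ makes the same expression nonpositive, hence $h$ decreasing, and the comparison flips.)

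The main obstacle, as in the upper-tail case, is simply bookkeeping: keeping track of which factor goes in the numerator versus denominator when dividing the two sides of \eqref{lower exponential 2} to arrive at a single clean ratio, and then verifying that the sign of the resulting calculus argument aligns with the hypothesis on the ordering of $x$ and $a$. Once the reduction is performed, everything follows from the same two tools already used in Proposition \ref{prop exponential}, so no new estimate is needed.
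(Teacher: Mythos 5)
Your proposal is correct and follows essentially the same route the paper intends: the paper's proof of Proposition \ref{prop lower exponential} is simply ``the same as Proposition \ref{prop exponential} with the roles of $x$ and $a$ switched,'' which is exactly your application of \eqref{log gamma} with $u=x+\tfrac12$, $v=a+\tfrac12$ followed by the monotonicity of $t\mapsto (a+t)^a/(x+t)^x$ (now increasing since $x<a$). Your reduction of \eqref{lower exponential 2} to $(1+\tfrac{1}{2a})^a\ge(1+\tfrac{1}{2x})^x$ and the sign check on the logarithmic derivative are both correct.
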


\begin{proof}  Essentially the same as for Proposition \ref{prop exponential}, but with the roles of $x$ and $a$ switched.
 \qed \end{proof}

\section{Comparison of the bounds} \label{sect comparison}

\subsection{Relative strength of our upper and lower tail bounds} Here we compare the elementary ``product bounds'' (\ref{upper},\,\ref{lower}) given in Theorem \ref{simple thm} with the Gamma function bounds (\ref{gamma upper 1},\,\ref{gamma lower 1}) given in Theorem \ref{theorem gamma} and the bounds (\ref{AB upper},\,\ref{AB lower}) obtained in Theorem \ref{theorem AB} from the moment generating function estimate.

It is clear from Theorem \ref{theorem gamma} (together with the scaling relationship \eqref{scaling}) that the product bounds (\ref{upper},\,\ref{lower}) are at least as strong (i.e. small) as the corresponding Gamma bounds (\ref{gamma upper 1},\,\ref{gamma lower 1}).  However the relationship between the Gamma bounds and the bounds  (\ref{AB upper},\,\ref{AB lower}) in Theorem \ref{theorem AB} is more complicated.  See Propositions \ref{prop exponential} and \ref{prop lower exponential}.  For the upper tail, with $a \le x$, the Gamma bound \eqref{gamma upper 1} is sharper than the bound \eqref{AB upper} from Theorem \ref{theorem AB} by a factor $\sqrt{(a+c/2)/(x+c/2)}$.  However, the situation is reversed for the lower tail, with $0 \le x \le a$, where now the bound \eqref{AB lower} from Theorem \ref{theorem AB} is sharper than the Gamma bound \eqref{gamma lower 1} by a factor $\sqrt{(x+c/2)/(a+c/2)}$.  Since the Gamma bound \eqref{gamma lower 1} and the product bound \eqref{lower} agree whenever $a-x$ is an integer, this suggests (but does not prove) that the bound \eqref{AB lower} is the best of the three for the lower tail.

Numerical investigations suggest that \eqref{AB lower} is in fact the best estimate to use for the lower tail, and beats the simple product rule $\ell(x,a,c)$ of \eqref{lower}.  Recall however, from Remark \ref{rem ell j},  the lower tail bounds derived from
\eqref{lower one step} in combination with the one-sided Chebyshev inequality \eqref{one sided},  in particular the functions $\ell_j(x,a,c)$ defined in \eqref{lower j}.  Numerical investigations suggest that for all $a,c > 0$ with sufficiently large $a/c$ there exists $x \in (0,a)$ such that the bound given in \eqref{AB lower} is less than the product bound \eqref{lower} and is less than the one-sided Chebyshev estimate \eqref{one sided}, but is \emph{greater} than $\ell_j(x,a,c)$ for some nonnegative
integer $j\le (a-x)/c$.

\subsection{Hoeffding bounds} \label{sect Hoeffding} Suppose $X = X_1+ \cdots + X_n$ where the $X_i$ are independent and take values in $[0,1]$, and let $a = \e X$.  Clearly $a < \infty$ and to avoid trivialities we assume $a > 0$.  Hoeffding \cite[Thm 1]{hoeffding} proved that for $a \le x < n$
    $$\mathbb{P}(X \ge x) \le \left(\frac{a}{x}\right)^x \left(\frac{n-a}{n-x}\right)^{n-x}.
    $$
The inequality
     $$
     \left(\frac{n-a}{n-x}\right)^{n-x} \le e^{x-a}
     $$
for $0 < x < n$, (see for example \cite{hagerup}), and the fact that $\mathbb{P}(X \ge x) = 0$ for $x > n$, together give the upper tail bound
    \begin{equation} \label{hoeffding upper tail}
    \mathbb{P}(X \ge x) \le \left(\frac{a}{x}\right)^x e^{x-a} \quad \mbox{ for all } x \ge a.
    \end{equation}
A similar argument with $X_i$ replaced by $1-X_i$ and $X$ replaced by $n-X$ gives the lower tail bound
    \begin{equation} \label{hoeffding lower tail}
    \mathbb{P}(X \le x) \le \left(\frac{a}{x}\right)^x e^{x-a} \quad \mbox{ whenever } 0 \le x \le a.
    \end{equation}
Notice that the right sides of \eqref{hoeffding upper tail} and \eqref{hoeffding lower tail} do not depend on the number $n$ of summands in $X$.  Other related inequalities, also referred to as Hoeffding or Chernoff--Hoeffding bounds, involve the parameter $n$.

From Proposition \ref{prop indep sum} we know that any random variable $X$ of the form above admits a 1-bounded size bias coupling.  Therefore the Hoeffding bounds (\ref{hoeffding upper tail},\,\ref{hoeffding lower tail}) are a special case of the bounds (\ref{AB upper},\,\ref{AB lower}) in our Theorem \ref{theorem AB}.  Our best upper tail bound, given by \eqref{upper} is smaller than the Hoeffding upper tail bound \eqref{hoeffding upper tail} by a factor $\displaystyle{\left(\frac{a+1/2}{x+1/2}\right)^{1/2}}$.  Moreover our results have a broader scope, applying to any random variable $X$ which admits a 1-bounded size bias coupling.  In particular it applies to sums of independent nonnegative random variables such as Uniform[0,4] and L\'evy[0,1] (as discussed in Examples \ref{example uniform} and \ref{example inf div}).

\bibliographystyle{spmpsci}      
\bibliography{concbib}   

%
%

\end{document}